\theoremstyle{plain}
\newtheorem{theorem}{Theorem}
\newtheorem{corollary}[theorem]{Corollary}
\newtheorem{lemma}[theorem]{Lemma}
\newtheorem*{theorem*}{Theorem}
\newtheorem*{corollary*}{Corollary}
\newtheorem*{lemma*}{Lemma}
\newtheorem*{proposition*}{Proposition}
\theoremstyle{definition}
\newtheorem{remark}[theorem]{Remark}
\newtheorem*{remark*}{Remark}
\renewcommand\labelenumi{\textup{\alph{enumi})}}
\renewcommand\theenumi\labelenumi
\renewcommand{\Im}{\ensuremath{\operatorname{Im}}}
\newcommand{\eup}{\mathrm{e}}
\newcommand{\iup}{\mathrm{i}}
\def\@makefnmark{\hbox{(\@textsuperscript{\normalfont\@thefnmark})}}
\newcommand{\primo}{1\textsuperscript{o}\;}
\newcommand{\secundo}{2\textsuperscript{o}\;}
\newcommand{\tertio}{3\textsuperscript{o}\;}
\newcommand{\quarto}{4\textsuperscript{o}\;}
\newcommand{\quinto}{5\textsuperscript{o}\;}
\newcommand{\sexto}{6\textsuperscript{o}\;}
\newcommand{\septimo}{7\textsuperscript{o}\;}
\newcommand{\octavo}{8\textsuperscript{o}\;}
\newcommand{\nono}{9\textsuperscript{o}\;}
\newcommand\Ee{\mathds{E}}
\newcommand\nat{\mathds{N}}
\newcommand\integer{\mathds{Z}}
\newcommand\real{{\mathds{R}}}
\newcommand\comp{{\mathds{C}}}
\newcommand\I{\mathds{1}}
\newcommand\Pp{\mathds{P}}
\newcommand\supp{\mathop{\mathrm{supp}}}
\newcommand\rd{{\mathds{R}^d}}
\newcommand{\Fscr}{\mathscr{F}}
\newcommand{\Fcal}{\mathcal{F}}
\newcommand{\Dcal}{\mathcal{D}}
\newcommand{\Scal}{\mathcal{S}}
\newcommand{\Tcal}{\mathcal{T}}
\begin{document}
\begin{flushright}\small
\underline{To appear in} \emph{Probability and Mathematical Statistics}\\
\bigskip\bigskip
\end{flushright}
\title[L\'evy Processes, Generalized Moments and Uniform Integrability]{\bfseries L\'evy Processes, Generalized Moments and Uniform Integrability}

\author[D.~Berger]{David Berger}
\author[F.~K\"{u}hn]{Franziska K\"{u}hn}
\author[R.L.~Schilling]{Ren\'e L.\ Schilling}
\address{TU Dresden\\ Fakult\"{a}t Mathematik\\ Institut f\"{u}r Mathematische Stochastik\\ 01062 Dresden, Germany}
\email{david.berger2@tu-dresden.de}
\email{franziska.kuehn1@tu-dresden.de}
\email{rene.schilling@tu-dresden.de}

\thanks{\emph{Acknowledgement}. The comments of an anonymous referee helped to improve the presentation of this paper. Financial support through the DFG-NCN Beethoven Classic 3 project SCHI419/11-1 \& NCN 2018/31/G/ST1/02252 is gratefully  acknowledged.}

\subjclass[2020]{60G51; 60G44; 60G40; 26A12; 26B35.}
\keywords{L\'evy process; Dynkin's formula; generalized moment; Gronwall's inequality; local martingale; condition D; condition DL}

\maketitle

\bigskip
\begin{quote}\begin{small}\noindent
    \textsc{Abstract.} We give new proofs of certain equivalent conditions for the existence of generalized moments of a L\'evy process $(X_t)_{t\geq 0}$; in particular, the existence of a generalized $g$-moment is equivalent to the uniform integrability of $(g(X_t))_{t\in [0,1]}$. As a consequence, certain functions of a L\'evy process which are integrable and local martingales are  already  true martingales. Our methods extend to moments of stochastically continuous additive processes, and we give new, short proofs for the characterization of lattice distributions and the transience of L\'evy processes.
\end{small}
\end{quote}

\section{Introduction}
\bigskip\noindent
A \emph{generalized moment} of a stochastic process $(X_t)_{t\geq 0}$ is an expression of the form $\Ee\left[g(X_t)\right]$. Such moments arise naturally when studying Markov semigroups. It is a classical result that for a L\'evy process and a submultiplicative function $g$ the $g$-moment exists if, any only if, $g(x)\I_{\{|x|>1\}}$ is integrable w.r.t.\ the jump measure of the process (Section~\ref{sec-gen}). Throughout the analysis and probability literature, further equivalent criteria for the existence of $g$-moments can be found. In this note we collect these criteria, add a few more equivalences, and give a unified presentation with novel proofs. These proofs frequently exploit martingale techniques and the Markov property, rather than the translation invariance of a L\'evy process, which means that many implications -- alas, not all -- remain valid in more general situations, cf.\  Remark~\ref{gen-077} on page~\pageref{gen-077}. A summary on the existing literature is also given in Remark~\ref{gen-077}. Our arguments are based on Dynkin's formula (which is a consequence of the martingale nature of the process) and Gronwall's lemma, and this technique can also be used (Section~\ref{sec-doo}) to show that certain functions of a L\'evy process $(f(X_t))_{t\geq 0}$ which are both a local martingale and integrable, i.e.\ $\Ee |f(X_t)|<\infty$, are already proper martingales. With some minor changes our methods extend to stochastically continuous additive processes (Section~\ref{sec-add}), and the criteria in Theorem~\ref{add-10} seem to be new, extending earlier work by Fujiwara~\cite{fujiwara} and Klass \& Yang~\cite{klass}. In the last part of the paper (Section~\ref{sec-lat}) we further apply our results to get a very short proof of the characterization of infinitely divisible lattice distributions and a martingale approach to the transience of L\'evy processes.

Let us recall a few key concepts and techniques which will be needed later on. Most of our notation is standard or self-explanatory; we use $|x|_{\ell^p}^p := \sum_{k=1}^d |x_k|^p$ with the usual modification if $p=\infty$. We write $\|f\|_{L^1(\rd,g)}:=\int_{\rd} |f(x)|\, g(x) dx$ for the weighted $L^1$-norm  and  $L^1(\rd,g)$ for the corresponding $L^1$-space (with a nonnegative, measurable weight function $g:\rd\to [0,\infty)$).

\subsection{L\'evy processes}
A L\'evy process $X=(X_t)_{t\geq 0}$ is a stochastic process with values in $\rd$, stationary and independent increments and right-continuous sample paths with finite left-hand limits (c\`adl\`ag). Our standard references for L\'evy processes are Sato \cite{sato} (for probabilistic properties) and Jacob \cite{jac-1} and \cite{barca} (for analytic aspects). It is well known that a stochastic process $X$ is a L\'evy process if it has c\`adl\`ag paths and if its conditional characteristic function is of the form
\begin{gather*}
    \Ee \left[\eup^{\iup \xi\cdot (X_t-X_s)} \mid \Fscr_s\right]
    = \eup^{-(t-s)\psi(\xi)},\quad 0\leq s\leq t,\; \xi\in\rd,
\end{gather*}
where $\Fscr_s = \sigma(X_r, r\leq s)$ is the natural filtration of $X$. The \emph{characteristic exponent} $\psi:\rd\to\comp$ is uniquely determined by the \emph{L\'evy--Khintchine formula}
\begin{gather}\label{intro-e01}
    \psi(\xi)
    = -\iup b\cdot\xi + \frac 12 Q\xi\cdot\xi + \int_{\rd\setminus\{0\}}\left(1-\eup^{\iup  \xi\cdot x}+\iup\xi\cdot x\I_{(0,1)}(|x|)\right)\nu(dx).
\end{gather}
The \emph{L\'evy triplet} $(b,Q,\nu)$ where $b\in\rd$, $Q\in\real^{d\times d}$ (a positive semidefinite matrix) and $\nu$ (a Radon measure on $\rd\setminus\{0\}$ such that $\int_{\rd\setminus\{0\}} \min\{|x|^2,1\}\,\nu(dx)<\infty$) uniquely describe $\psi$.

Using the characteristic exponent we can determine the infinitesimal generator $A$ of the process $X$ either as \emph{pseudo-differential operator}
\begin{align}
\notag
    A u(x) &= -\psi(D)u(x) = \Fcal^{-1}[-\psi \Fcal u](x), \quad u \in \Scal(\rd),
\intertext{where $\Fcal u(\xi) = (2\pi)^{-d}\int_\rd \eup^{-\iup\xi\cdot x} u(x)\,dx$ is the Fourier transform and $\Scal(\rd)$ is the Schwartz space of rapidly decreasing smooth functions, or as integro-differential operator}
\label{intro-e02}
    A u(x) &= Lu(x) + Ju(x) + Ku(x)
\intertext{where $L$, $J$ and $K$ are again linear operators: $L$ is the local part, $J$ takes into account the small jumps and $K$ the large jumps, i.e.}
\notag
    L u(x) &= b\cdot\nabla u(x) + \frac 12\nabla \cdot Q\nabla u(x), \\
\notag
    J u(x) &= \int_{0<|y|<1} \left(u(x+y)-u(x)-y\cdot\nabla u(x)\right)\nu(dy),\\
\notag
    K u(x) &= \int_{|y|\geq 1} \left(u(x+y)-u(x)\right)\nu(dy).
\end{align}
The precise form of the domain $\Dcal(A)$ of $A$  (as closed operator on the Banach space of continuous functions vanishing at infinity $(C_\infty(\rd),\|\cdot\|_\infty)$) is not known; but both the test functions $C_c^\infty(\rd)$ and the Schwartz spaces $\Scal(\rd)$ are operator cores. On the other hand, the expression \eqref{intro-e02} has a pointwise meaning for every positive $u\in C^2(\rd)$: using a Taylor expansion, it is easy to see that for every $x\in \real^d$ one has $|(L+J)u(x)|\leq C\sup_{|x-y|\leq 1} (|\nabla u(y)|+|\nabla^2 u(y)|)<\infty$ for some finite constant $C>0$ while positivity is needed for $Ku(x)\in (-\infty,\infty]$.\footnote{The latter may be replaced by a polynomial growth bound on $u$, $|u(x)|\leq C(1+|x|^p)$ and a moment condition $\int_{|y|>1}|y|^p\nu(dy)<\infty$ on $\nu$.}  We will continue to use the notation $Au(x)$ despite the fact that $C^2(\rd)\not\subset \Dcal(A)$; in particular, we allow that $Au(x)$ takes values in $(-\infty,\infty]$.

The \emph{transition semigroup} $(P_t)_{t\geq 0}$ corresponding to the generator $A$ or the process $X$ is given by $P_t u(x) = \Ee \left[u(x+X_t)\right]$. Its adjoint, $P_t^*u(x) = \Ee \left[u(x-X_t)\right]$ is the transition semigroup of the L\'evy process $-X = (-X_t)_{t\geq 0}$.

\subsection{Additive processes}
An \emph{additive process} is a stochastic process $(X_t)_{t \geq 0}$ with independent increments, values in $\rd$ and starting from $0$. In particular, every L\'evy process is a stochastically continuous additive process. Because of the non-stationarity of the increments, an additive process may have fixed jump discontinuities and need not be a semimartingale; a simple --~and at the same time typical~-- example is the process $X_t = b_t$ where $(b_t)_{t \geq 0}$ is a deterministic c\`adl\`ag function of infinite variation.

For each $t\geq 0$, the law of a stochastically continuous additive process $(X_t)_{t \geq 0}$ is infinitely divisible, i.e.\ the law of $X_t$ is uniquely determined by a L\'evy--Khin\-tchine formula with a $t$-dependent characteristic triplet $(b_t,Q_t,\nu_t)$. Additivity implies that these triplets satisfy $\langle \xi, Q_s \xi\rangle \leq \langle \xi, Q_t \xi\rangle$ and $\nu_s(B)\leq \nu_t(B)$ for all $s\leq t$ and every $\xi\in \real^d$ and Borel set $B\subseteq\rd\setminus\{0\}$. Moreover, for each $\xi\in\rd$ the map
\begin{gather}\label{intro-e05}
    t\mapsto \langle \xi,b_t\rangle + \langle \xi, Q_t\xi\rangle + \int_{x\neq 0} \min\left\{1, |x|^2\right\} \nu_t(dx),
    \quad t\geq 0,
\end{gather}
is continuous. A full discussion is given in Sato~\cite[Chapter 2.9]{sato}.

\subsection{Dynkin's formula}

Let $(X_t)_{t\geq 0}$ be a L\'evy process (or a strong Markov process) and denote by $\Fscr_t = \sigma(X_s, s\leq t)$ its natural filtration and $(A,\Dcal(A))$ the infinitesimal generator. Dynkin's formula states that for every $u\in\Dcal(A)$ and every stopping time $\sigma$ with $\Ee\left[\sigma\right]<\infty$ we have
\begin{gather}\label{intro-e06}
    \Ee \left[u(X_\sigma+x)\right] - u(x) = \Ee\left[\int_{[0,\sigma)} Au(X_s)\,ds\right].
\end{gather}
There are several ways to prove this result, e.g.\ using arguments from potential theory (as in \cite[Proposition 7.31]{schilling-bm}), semigroup theory (as in \cite[Proposition VII.1.6]{revuz-yor}) or by It\^o's formula. At the heart of the argument is the fact that
\begin{gather}\label{intro-e08}
    M_t^{[u]} := u(X_t+x) - u(x) - \int_{[0,t)} Au(X_s)\,ds,\quad u\in\Dcal(A),
\end{gather}
is an $\Fscr_t$-martingale combined with a stopping argument. There are various ways to extend the class of functions $u$ for which we have some kind of Dynkin's formula.  It is clear that formula \eqref{intro-e06} can be extended to all functions $u$ with $u(X_\sigma)\in L^1(\Pp)$ and $Au(X_{s\wedge\sigma})\in L^1(ds\otimes\Pp)$. Such moment estimates will be given below.

Here we need a \emph{Dynkin inequality} which we are going to prove for positive $g\in C^2(\rd)$.
\begin{lemma}[Dynkin's inequality]\label{intro-03}
    Let $(X_t)_{t\geq 0}$ be a L\'evy process with generator $(A,\Dcal(A))$ and extend $A$ using \eqref{intro-e02} to $C^2(\rd)$. For every $g\in C^2(\rd)$ satisfying $g(x)\geq 0$ and every stopping time $\sigma$ the following inequality holds
    \begin{gather}\label{intro-e14}
        \Ee g(X_{t\wedge\sigma}) \leq g(0) + \Ee\left[\int_{[0,t\wedge\sigma)} |Ag(X_s)|\,ds\right].
    \end{gather}
\end{lemma}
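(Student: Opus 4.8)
\emph{Overall strategy.} The plan is to prove \eqref{intro-e14} first for smooth compactly supported $g$, where it comes (with equality) from the martingale \eqref{intro-e08} and optional stopping, and then to remove the restrictions by stopping the process inside a ball and letting the ball and a spatial cut-off of $g$ tend to the whole space. So, \emph{Stage 1:} for $g\in C_c^\infty(\rd)\subset\Dcal(A)$ the process $M^{[g]}$ of \eqref{intro-e08} (with $x=0$) is an $\Fscr_t$-martingale. For a general $g\in C_c^2(\rd)$ I would mollify, $g_m:=g*\rho_m\in C_c^\infty(\rd)$ with supports in a fixed compact set; using $\int_{0<|y|<1}|y|^2\,\nu(dy)<\infty$ and $\nu(\{|y|\ge1\})<\infty$, a separate estimate of the local part $L$, the small-jump part $J$ and the large-jump part $K$ shows $\|g_m-g\|_\infty\to0$ and $\|Ag_m-Ag\|_\infty\to0$, hence $M^{[g_m]}\to M^{[g]}$ uniformly on $[0,t]\times\Omega$, so $M^{[g]}$ is again a martingale. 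Optional stopping at the bounded stopping time $t\wedge\sigma$ gives $\Ee g(X_{t\wedge\sigma})=g(0)+\Ee\int_{[0,t\wedge\sigma)}Ag(X_s)\,ds$, and $Ag\le|Ag|$ yields \eqref{intro-e14} for $g\in C_c^2(\rd)$ (no sign condition needed yet). One may instead quote It\^o's formula directly for $g\in C_b^2(\rd)$.

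\emph{Stage 2 (localization).} In the general case we may assume the right-hand side of \eqref{intro-e14} is finite. Fix $R>0$, put $\tau_R:=\inf\{s\ge0:|X_s|>R\}$ and $\sigma_R:=\sigma\wedge\tau_R$. Choose $\chi\in C_c^\infty(\rd)$ radial and radially non-increasing with $\I_{B(0,1)}\le\chi\le\I_{B(0,2)}$, set $\chi_n(x):=\chi(x/n)$ and $g_n:=g\chi_n\in C_c^2(\rd)$, so that $0\le g_n\uparrow g$ pointwise. Applying Stage 1 to $g_n$ and $\sigma_R$ gives $\Ee g_n(X_{t\wedge\sigma_R})=g(0)+\Ee\int_{[0,t\wedge\sigma_R)}Ag_n(X_s)\,ds$. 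The decisive observation is that for $|x|\le R$ and $n\ge R+1$ one has $g_n\equiv g$ on $B(0,R+1)$, so $Lg_n(x)=Lg(x)$ and $Jg_n(x)=Jg(x)$ (only the local part and jumps of size $<1$ enter there), whereas $0\le\chi_n\le1$ and $g\ge0$ give $Kg_n(x)=\int_{|y|\ge1}\bigl(g(x+y)\chi_n(x+y)-g(x)\bigr)\,\nu(dy)\le Kg(x)$; hence $Ag_n(x)\le Ag(x)\le|Ag(x)|$ for such $x$. Since $|X_s|\le R$ for every $s<t\wedge\sigma_R$, this bounds the time integral and yields $\Ee g_n(X_{t\wedge\sigma_R})\le g(0)+\Ee\int_{[0,t\wedge\sigma_R)}|Ag(X_s)|\,ds$.

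\emph{Stage 3 (two limits).} Letting $n\to\infty$ and using monotone convergence on the left, $\Ee g(X_{t\wedge\sigma_R})\le g(0)+\Ee\int_{[0,t\wedge\sigma_R)}|Ag(X_s)|\,ds\le g(0)+\Ee\int_{[0,t\wedge\sigma)}|Ag(X_s)|\,ds$. Because the c\`adl\`ag paths of $X$ are bounded on $[0,t]$, $\tau_R\to\infty$ and in fact $t\wedge\sigma_R=t\wedge\sigma$ for all large $R$ (pathwise), so $g(X_{t\wedge\sigma_R})\to g(X_{t\wedge\sigma})$; Fatou's lemma then gives \eqref{intro-e14}.

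\emph{Main obstacle.} The genuine difficulty is the nonlocality of $A$: cutting $g$ off with $\chi_n$ changes $Ag$, and it feeds the unbounded derivatives of $\chi_n$ into $Lg_n$ and $Jg_n$, so a naive domination $Ag_n\le|Ag|$ fails globally. Stopping the process at $\tau_R$ is exactly what repairs this, since on the time set that matters the arguments of $g_n$ remain in the ball $B(0,R+1)$ where $\chi_n\equiv1$; only the large-jump term $K$ is affected by the truncation, and there the one-sided bound $\chi_n\le1$ together with $g\ge0$ — the sole place where positivity of $g$ is used — keeps $Kg_n\le Kg$. What remains is routine bookkeeping with the $[0,\infty]$-valued quantities $Kg$ and $|Ag|$ and the verification that $\tau_R$ and $\sigma_R$ are stopping times.
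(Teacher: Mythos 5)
Your proposal is correct and follows essentially the same route as the paper: truncate $g$ by a smooth cut-off to land in $\Dcal(A)$, stop the process at the exit time $\tau_R$ so that the cut-off leaves the local and small-jump parts of $Ag$ unchanged, use $g\geq 0$ to dominate the large-jump part, and finish with Fatou. The only cosmetic differences are that you decouple the truncation index $n$ from the stopping radius $R$ (adding a monotone-convergence step the paper avoids by tying the two together) and that you justify $C_c^2(\rd)\subset\Dcal(A)$ by mollification, which the paper simply asserts.
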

\begin{proof}
Pick for every $R>0$ some cut-off function $\chi_R\in C^\infty(\rd)$ with $\I_{\overline{B}_{R+1}(0)} \leq \chi_R \leq \I_{\overline{B}_{R+2}(0)}$. Since $g\chi_R\in C_c^2(\rd)$ we know that $g\chi_R\in\Dcal(A)$, and we see that for any stopping time $\sigma$ the process $\big(M^{[g\chi_R]}_{t\wedge\sigma}\big)_{t\geq 0}$ is a martingale. Therefore we have
\begin{gather}\label{intro-e10}
    \Ee \left[(g\chi_R)(X_{t\wedge\sigma})\right] - g(0) = \Ee\left[\int_{[0,t\wedge\sigma)} A(g\chi_R)(X_s)\,ds\right].
\end{gather}
If we replace $\sigma$ by the stopping time $\sigma\wedge\tau_R$ where $\tau_R = \inf\left\{s\geq 0 \mid |X_s|\geq R\right\}$, then we can use the fact that $|X_s|\leq R$ if $s \in [0,t\wedge\sigma\wedge\tau_R)$. This implies, in particular, that $\partial^\alpha(g\chi_R)(X_s) = \partial^\alpha g(X_s)$, and we see from the integro-differential representation \eqref{intro-e02} of $A$ that for all $0\le s<t\wedge \sigma\wedge \tau_R$ the following holds:
\begin{equation}\label{intro-e12}\begin{aligned}
    A (g\chi_R)(X_s)
    &= L(g\chi_R)(X_s) + J(g\chi_R)(X_s) + K(g\chi_R)(X_s)\\
    &= b\cdot\nabla g(X_s) + \frac 12\nabla \cdot Q\nabla g(X_s) \\
    &\qquad\mbox{} + \!\!\!\int\limits_{0<|y|<1}\!\!\! \left(g(X_s+y)-g(X_s)-y\cdot\nabla g(X_s)\right)\nu(dy)\\
    &\qquad\mbox{} + \int_{|y|\geq 1} \left((g\chi_R)(X_s+y)-g(X_s)\right)\nu(dy).
\end{aligned}\end{equation}
For the second equality observe that $|X_s|\leq R$, $|X_s+y|\leq R+1$ for $|y|<1$ and that $L$ is a local operator. Since $g$ is positive, we have $g\chi_R \leq g$, and we conclude that $A (g\chi_R)(X_s) \leq Ag(X_s)$. Inserting this into \eqref{intro-e10} gives
\begin{align*}
    \Ee \left[(g\chi_R)(X_{t\wedge\sigma\wedge\tau_R})\right] - g(0)
    &\leq \Ee\left[\int_{[0,t\wedge\sigma\wedge\tau_R)} A g(X_s)\,ds\right]\\
    &\leq \Ee\left[\int_{[0,t\wedge\sigma)} |A g(X_s)|\,ds\right].
\end{align*}
Since $g\geq 0$, we can use Fatou's lemma on the left-hand side and get \eqref{intro-e14}.
\end{proof}

\subsection{Friedrichs mollifiers}
Let $j:\rd\to [0,\infty)$ be a $C^\infty$-function with compact support $\supp j\subset \overline{B}_1(0)$ such that $j(x)$ is rotationally symmetric and the integral $\int j(x)\,dx$ is equal to $1$. For every $\epsilon>0$ we define $j_\epsilon(x) := \epsilon^{-d} j(x/\epsilon)$, i.e.\ $j_\epsilon$ is again smooth, rotationally symmetric and satisfies $\supp j_\epsilon\subset \overline{B}_\epsilon(0)$ and $\int j_\epsilon(x)\,dx = 1$. For any locally bounded function $g:\rd \to \real$ the following convolution exists and defines a $C^\infty$-function:
\begin{gather*}
    g^\epsilon(x) := j_\epsilon * g(x) := \int g(x-y)j_\epsilon(y)\,dy,\quad x\in\rd.
\end{gather*}
Moreover, $\supp g^\epsilon \subset \supp g + \supp j_\epsilon \subset \supp g + \overline{B}_\epsilon(0)$. The function $g^\epsilon$ is called \emph{Friedrichs regularization} of $g$.

\subsection{Submultiplicative functions}
A function $g:\rd \to [0,\infty)$ is said to be \emph{submultiplicative} if there exists a constant $c = c(g)\in [1,\infty)$ such that
\begin{gather*}
    \forall x,y\in\rd\::\: g(x+y) \leq c g(x) g(y).
\end{gather*}
In order to avoid pathologies, we consider only measurable submultiplicative functions.\footnote{An example of a non-measurable submultiplicative function is $g(x) = \eup^{a(x)}$, $x\in\real$, where $a$ is a non-measurable solution to the functional equation $a(x+y)=a(x)+a(y)$.}
Every locally bounded submultiplicative function grows at most exponentially, i.e.\ there are constants $a,b\in (0,\infty)$ such that $g(x)\leq a \eup^{b|x|}$. Since $1+g$ inherits submultiplicativity from $g$, we may assume that $g\geq 1$. The following lemma shows that we can even assume that a submultiplicative function is smooth.
\begin{lemma}\label{intro-05}
    Let $g$ be a locally bounded submultiplicative function and $g^\epsilon$ its Fried\-richs regularization. Then $g^\epsilon\in C^\infty$ is submultiplicative and it satisfies
    \begin{gather}\label{intro-e16}
        \forall x\in\rd\::\quad c_\epsilon^{-1} g(x) \leq g^\epsilon(x)\leq c_\epsilon g(x)
    \end{gather}
    for some constant $c_\epsilon = c_{\epsilon,g}$.
\end{lemma}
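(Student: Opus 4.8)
The smoothness of $g^\epsilon$ is a standard property of Friedrichs mollification (the defining convolution integral converges because $g$ is locally bounded and $j_\epsilon$ has compact support), so the substance of the lemma is the two-sided bound \eqref{intro-e16}. My plan is to prove \eqref{intro-e16} first and then deduce submultiplicativity of $g^\epsilon$ essentially for free: once \eqref{intro-e16} is known, for all $x,y\in\rd$
\begin{gather*}
    g^\epsilon(x+y) \le c_\epsilon\, g(x+y) \le c_\epsilon\, c(g)\, g(x)\, g(y) \le c_\epsilon^{3}\, c(g)\, g^\epsilon(x)\, g^\epsilon(y),
\end{gather*}
where $c(g)\in[1,\infty)$ is the submultiplicativity constant of $g$; hence $g^\epsilon$ is submultiplicative with constant $c_\epsilon^{3}c(g)\ge 1$.

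For \eqref{intro-e16} itself I would first record two elementary facts about a locally bounded submultiplicative $g$ (which, as remarked before the lemma, we may take to satisfy $g\ge 1$): it is finite everywhere (local boundedness), and — excluding the trivial case $g\equiv 0$ — strictly positive everywhere, since $g(0)\le c(g)g(0)^2$ gives $g(0)\ge c(g)^{-1}>0$ and then $0<g(0)\le c(g)g(x)g(-x)$ forces $g(x)>0$. Set $m_\epsilon:=\sup_{|y|\le\epsilon}g(y)$, which is finite and positive precisely because $g$ is locally bounded. Now I feed submultiplicativity into the convolution, using $\supp j_\epsilon\subset\overline{B}_\epsilon(0)$ and $\int j_\epsilon=1$: from $g(x-y)\le c(g)g(x)g(-y)\le c(g)m_\epsilon\, g(x)$ for $|y|\le\epsilon$ one obtains $g^\epsilon(x)\le c(g)m_\epsilon\, g(x)$, and from $g(x)\le c(g)g(x-y)g(y)\le c(g)m_\epsilon\, g(x-y)$, i.e.\ $g(x-y)\ge (c(g)m_\epsilon)^{-1}g(x)$, one obtains $g^\epsilon(x)\ge(c(g)m_\epsilon)^{-1}g(x)$. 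Thus \eqref{intro-e16} holds with $c_\epsilon:=\max\{1,\,c(g)m_\epsilon\}$.

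There is no serious obstacle here; the proof is a short computation. The one thing to get right is to apply submultiplicativity in the \emph{mixed} form $g(x-y)\le c(g)g(x)g(-y)$ for the upper estimate and $g(x)\le c(g)g(x-y)g(y)$ for the lower estimate, and to notice that local boundedness of $g$ on $\overline{B}_\epsilon(0)$ is exactly what makes $m_\epsilon$ finite (upper bound) and — after recording $g>0$ — usable as a positive lower multiplier (lower bound). Recording the strict positivity of $g$ up front is what legitimizes dividing by $g(y)$ in the lower estimate.
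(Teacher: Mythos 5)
Your proof is correct and follows essentially the same route as the paper's: submultiplicativity of $g^\epsilon$ is deduced from the two-sided bound \eqref{intro-e16}, the upper estimate comes from $g(x-y)\leq c\,g(x)g(-y)$ integrated against $j_\epsilon$, and the lower estimate from $g(x)\leq c\,g(x-y)g(y)$, with $c_\epsilon$ controlled by $c\sup_{|y|\leq\epsilon}g(y)$. Your explicit remarks on the strict positivity of $g$ and the finiteness of $m_\epsilon$ are harmless extra care rather than a different argument.
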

\begin{proof}
    Submultiplicativity follows immediately from the two-sided estimate \eqref{intro-e16}:
    \begin{gather*}
        \smash[t]{g^\epsilon(x+y) \leq c_\epsilon g(x+y) \leq c_\epsilon c g(x)g(y) \leq c_\epsilon^3 c g^\epsilon(x) g^\epsilon(y).}
    \end{gather*}
    In order to see \eqref{intro-e16}, we use the definition of $g^\epsilon$ and fact that $g$ is submultiplicative,
    \begin{align*}
        g^\epsilon(x)
        &= \smash[b]{\int g(x-y)j_\epsilon(y)\,dy
        \leq c g(x) \int g(-y)j_\epsilon(y)\,dy
        \leq c \sup_{|y|\leq \epsilon} g(y) g(x)}
    \intertext{and}
        g(x)
        &= \int g(x)j_\epsilon(y)\,dy
        \leq c\int g(x-y)g(y)j_\epsilon(y)\,dy
        \leq c \sup_{|y|\leq \epsilon} g(y) g^\epsilon(x).
    \qedhere
    \end{align*}
\end{proof}

\section{Generalized moments and uniform integrability}\label{sec-gen}

Let $(X_t)_{t\geq 0}$ be a L\'evy process with triplet $(b,Q,\nu)$. The following moment result for a locally bounded submultiplicative functions $g$ is well-known, cf.\ Sato~\cite[Theorem 25.3, p. 159]{sato}:
\begin{gather}\label{gen-e18}
    \Ee \left[g(X_t)\right] < \infty \, \, \text{for some (hence, all) $t>0$}
    \iff
    \int_{|y|\geq 1} g(y)\,\nu(dy) < \infty.
\end{gather}
Our aim is to show that this is also equivalent to a certain uniform integrability condition.  Although we cast the statement and proof for L\'evy processes, an extension to certain L\'evy-type processes is possible; see Remark~\ref{gen-07} below. We denote by $\Tcal$ the family of stopping times for the process $X$ equipped with its natural filtration.

\begin{theorem}\label{gen-06}
    Let $(X_t)_{t\geq 0}$ be a L\'evy process with generator $A$, transition semigroup $(P_t)_{t\geq 0}$, and triplet $(b,Q,\nu)$, and let $g$ be a locally bounded submultiplicative function. The following assertions are equivalent:
    \begin{enumerate}\itemsep=6pt
    \item\label{gen-06-a}
        $\Ee \left[g(X_t)\right]$ is finite for some \textup{(}hence, all\textup{)} $t>0$;
    \item\label{gen-06-b}
        $\Ee \left[\sup_{s\leq t} g(X_s)\right]$ is finite for some \textup{(}hence, all\textup{)} $t>0$;
   \item\label{gen-06-c}
        $\left\{g(X_{\sigma})\right\}_{\sigma \in \Tcal, \sigma \leq t}$ is uniformly integrable for every $t>0$, i.e. 
        \begin{equation*}
	        \lim_{R\to\infty} \sup_{\sigma\in \Tcal,\sigma\leq t}\int_{ g(X_\sigma)>R}g(X_\sigma)\, d\Pp=0.
	       \end{equation*}
    \item\label{gen-06-d}
        $\sup_{\sigma\in\Tcal, \sigma\leq t}\Ee \left[g(X_\sigma)\right]$ is finite for every $t>0$;
    \item\label{gen-06-e}
        $\int_{|y|\geq 1} g(y)\,\nu(dy) < \infty$;
    \item\label{gen-06-f}
        The adjoint semigroup $P^*_tf(x):=\Ee \left[f(x-X_t)\right]$ is a strongly continuous semigroup on the weighted $L^1$-space $L^1(\rd,g)$.
    \item\label{gen-06-g}
    	The adjoint generator $(A^* \phi)(x) := (A \phi)(-x)$ satisfies $A^* \phi \in L^1(\real^d,g)$ for all $\phi \in C_c^{\infty}(\real^d)$.
	\item\label{gen-06-h}
		There exists a non-negative $\phi \in C_c^{\infty}(\rd)$, $\phi \not \equiv 0$, with $A^* \phi \in L^1(\rd,g)$.
    \end{enumerate}
    If one \textup{(}hence, all\textup{)} of the conditions is satisfied, then there are constants $c_i>0$, $i=1,2$, such that
    \begin{align}\label{eq-exp-1}
    	\Ee[g(X_t)] \leq c_1 \eup^{c_2 t}, \qquad t \geq 0,
    \end{align}
    and $\|A^* \phi\|_{L^1(\rd,g)}$ is bounded by a constant multiple of
    \begin{align*}
    	 \left( |b|_{\ell^1} + ||Q||_{\ell^1} + \int_{y \neq 0} (1 \wedge |y|^2) \, \nu(dy) + \int_{|y|\geq1} g(y) \, \nu(dy) \right) \|\phi\|_{C_b^2(\rd)}.
    \end{align*}
\end{theorem}

\begin{remark}\label{gen-07}
    There is another equivalent condition if $x\mapsto g(|x|)$ is locally bounded, submultiplicative and $g(r)$ is increasing:
    
    \begin{enumerate}\setcounter{enumi}{8}
    \item\label{gen-06-i}
    $\Ee\left[g\left(\sup_{s\leq t}|X_s|\right)\right]<\infty$ for some \textup{(}hence, all\textup{)} $t>0$.
    \end{enumerate}
    
    \noindent
    The direction \ref{gen-06-i}$\Rightarrow$\ref{gen-06-a} follows from the assumption that $g$ is increasing. The implication \ref{gen-06-b}$\Rightarrow$\ref{gen-06-i} does not need monotonicity since we have
    \begin{gather*}
        g\left(\sup\nolimits_{s\leq t}|X_s|\right) \leq \sup\nolimits_{s\leq t} g\left(|X_s|\right)
    \end{gather*}
    at least if $x\mapsto g(|x|)$ is continuous. This can always be achieved by a Friedrichs regularization.

    Let us also point out that we may replace $A^*$ and $P_t^*$ by $A$, and $P_t$ if either \textup{(}the law of\textup{)} $X_t$ is symmetric or if $g$ is even, i.e.\ $g(x)=g(-x)$.
\end{remark}

\begin{remark}\label{gen-077}
    Many of the equivalent criteria in Theorem~\ref{gen-06} are scattered throughout the analysis and probability literature, and they are usually proved separately, using ad-hoc methods.

    The equivalence of \ref{gen-06-a}, \ref{gen-06-d} (for deterministic times), \ref{gen-06-e} and \ref{gen-06-i} can be found in Sato \cite[Chapter~25]{sato}. The equivalence of \ref{gen-06-a} and \ref{gen-06-b} is due to Siebert \cite{siebert}, and variants of \ref{gen-06-g}, \ref{gen-06-h} appear first in Hulanicki \cite{hulanicki}; their proofs are cast in the language of probability on \textup{(}Lie\textup{)} groups. The conditions \ref{gen-06-c}, \ref{gen-06-f}, and the condition \ref{gen-06-d} \textup{(}with bounded stopping times\textup{)} are new, and so are the streamlined proofs given in the present paper.

    Some of our arguments carry over to L\'evy-type processes whose generators have bounded coefficients \textup{(}see \cite[p.~55]{BSW} for the notation\textup{)}; in particular
    \ref{gen-06-e}$\Rightarrow$\ref{gen-06-a} \textup{[}using the alternative proof below\textup{]}
    $\Rightarrow$\ref{gen-06-b}$\Rightarrow$\ref{gen-06-c}$\Rightarrow$\ref{gen-06-d} becomes
     \begin{align*}
     	\sup_{x \in \rd} \int_{|y| \geq 1} g(y) \, \nu(x,dy)<\infty
     	&\implies \sup_{x \in \rd} \Ee^x[g(X_t-x)]<\infty\\
     	&\implies \sup_{x \in \rd} \Ee^x[\sup_{s \leq t} g(X_s-x)]<\infty \\
     	&\implies  \{g(X_{\sigma}-x)\}_{\sigma \in \Tcal, \sigma \leq t} \; \text{is uniformly\ integrable} \\
     	&\implies \sup_{\sigma \in \Tcal,\sigma \leq t} \Ee^x[g(X_{\sigma}-x)]<\infty,
     \end{align*}
     while
     \ref{gen-06-d}$\Rightarrow$\ref{gen-06-e} only yields $\inf_{x\in\rd} \int g(y) \, \nu(x,dy)<\infty$, and an additional condition of the type
     \begin{gather*}
        \sup_{x\in\rd} \int_{|y| \geq 1} g(y) \, \nu(x,dy) \leq C \inf_{x\in\rd} \int_{|y| \geq 1} g(y) \, \nu(x,dy)
     \end{gather*}
     is needed to get equivalences; this is partly worked out in \cite{kuehn-spa}.
\end{remark}

In order to prove this theorem, we need a few preparations.

\begin{lemma}\label{gen-08}
    Let $g$ be a locally bounded submultiplicative function. If $(X_t)_{t\geq 0}$ is a L\'evy process, then
    \begin{gather*}
        \Ee\left[\sup_{s\leq T} g(X_s)\right] = \kappa_T < \infty
    \quad\text{implies}\quad
        \Ee\left[\sup_{s\leq 2T} g(X_s)\right] \leq \kappa_T(1+c\kappa_T)<\infty
    \end{gather*}
    and
    \begin{equation*}
    	\Ee[g(X_t)] < \infty \, \, \text{for some $t>0$}
    	    \quad\text{implies}\quad
    	\Ee[g(X_t)]<\infty \, \, \text{for all $t>0$}.
    \end{equation*}
\end{lemma}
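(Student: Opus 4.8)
The plan is to prove the two assertions separately, using only submultiplicativity of $g$ (with constant $c=c(g)\geq1$) together with the stationarity and independence of the increments of the L\'evy process $X$. For the first assertion I would split the time interval $[0,2T]$ at the deterministic time $T$: for $s\in[T,2T]$ write $X_s=X_T+(X_s-X_T)$, so that submultiplicativity gives $g(X_s)\leq c\,g(X_T)\,g(X_s-X_T)$ and hence
\[
    \sup_{T\leq s\leq 2T}g(X_s)\leq c\Big(\sup_{r\leq T}g(X_r)\Big)\Big(\sup_{T\leq s\leq 2T}g(X_s-X_T)\Big).
\]
The first factor on the right is $\Fscr_T$-measurable, while the second is measurable with respect to $\sigma(X_s-X_T:s\geq T)$, hence independent of $\Fscr_T$; moreover $(X_{T+u}-X_T)_{u\geq0}$ has the same distribution as $(X_u)_{u\geq0}$, so the second factor has expectation $\kappa_T$. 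Taking expectations and using this independence yields $\Ee\big[\sup_{T\leq s\leq 2T}g(X_s)\big]\leq c\kappa_T^2$, and since $\sup_{s\leq 2T}g(X_s)\leq\sup_{s\leq T}g(X_s)+\sup_{T\leq s\leq 2T}g(X_s)$ the bound $\Ee\big[\sup_{s\leq2T}g(X_s)\big]\leq\kappa_T(1+c\kappa_T)$ follows.

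For the second assertion I would argue in two steps. \emph{(Upward.)} If $\Ee[g(X_{t_0})]<\infty$, then for $n\in\nat$ write $X_{nt_0}=\sum_{k=1}^n(X_{kt_0}-X_{(k-1)t_0})$ as a sum of $n$ independent copies of $X_{t_0}$; iterating submultiplicativity gives $g(X_{nt_0})\leq c^{n-1}\prod_{k=1}^n g(X_{kt_0}-X_{(k-1)t_0})$, and independence then yields $\Ee[g(X_{nt_0})]\leq c^{n-1}\Ee[g(X_{t_0})]^n<\infty$. \emph{(Downward.)} Let $0<t<s$ with $\Ee[g(X_s)]<\infty$. Since $X_s=X_t+(X_s-X_t)$ with independent summands and $X_s-X_t$ distributed as $X_{s-t}$, Fubini--Tonelli (valid as $g\geq0$) gives $\Ee[g(X_s)]=\iint g(x+z)\,\Pp(X_t\in dx)\,\Pp(X_{s-t}\in dz)<\infty$, so the inner integral $z\mapsto\int g(x+z)\,\Pp(X_t\in dx)$ is finite for $\Pp(X_{s-t}\in\cdot)$-almost every $z$; fix such a $z_0$. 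From $g(x)=g\big((x+z_0)+(-z_0)\big)\leq c\,g(-z_0)\,g(x+z_0)$ and the fact that $g(-z_0)<\infty$ (a locally bounded function is finite-valued), integrating in $x$ gives $\Ee[g(X_t)]\leq c\,g(-z_0)\int g(x+z_0)\,\Pp(X_t\in dx)<\infty$. Combining the two steps proves the claim: given an arbitrary $t>0$, pick $n$ with $nt_0>t$, obtain $\Ee[g(X_{nt_0})]<\infty$ from the upward step, and then $\Ee[g(X_t)]<\infty$ from the downward step applied with $s=nt_0$.

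The inductions on submultiplicativity and the bookkeeping with suprema are routine; the only step that needs a little care is the downward direction, where $X_s$ and $X_s-X_t$ are \emph{not} independent, so one cannot simply factor the estimate $g(X_t)\leq c\,g(X_s)\,g(X_s-X_t)$ under the expectation. The Fubini argument circumvents this by selecting a single deterministic shift $z_0$ for which $x\mapsto g(x+z_0)$ is $\Pp(X_t\in\cdot)$-integrable.
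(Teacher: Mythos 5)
Your proof is correct and follows essentially the same route as the paper: splitting at the deterministic time $T$ and using submultiplicativity together with stationarity and independence of increments for the first assertion, and for the second assertion the same upward iteration plus the same ``find a good shift $z_0$'' argument for the downward step (the paper phrases it via the Markov property, you via Tonelli, but it is the identical computation). The only cosmetic difference is that you run the upward step as a single product over $n$ increments rather than iterating the doubling argument.
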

\begin{proof}
   \textbf{\primo}
    We have 
    \begin{align*}
        \Ee\left[\sup_{s\leq 2T} g(X_s)\right]
        \leq& \Ee\left[\sup_{s\leq T} g(X_s)\right] + \Ee\left[\sup_{T \leq s\leq 2T} g(X_s)\right]\\
        =& \kappa_T + \Ee\left[\sup_{s\leq T} g(X_{s+T})\right].
    \end{align*}
    Since $g$ is a submultiplicative function, we see $g(X_{s+T}) \leq c g(X_{s+T}-X_T)g(X_T)$; moreover, $X_T$ and the process $(X_{s+T}-X_T)_{s\geq 0}\sim (X_s)_{s\geq 0}$ are independent,\footnote{We may replace this by the (strong) Markov property if $\sup\limits_{y\in\rd} \sup\limits_{s\leq T} \Ee^y[g(X_s-y)]<\infty$. For L\'evy processes the first supremum is always trivial.} and so
    \begin{align*}
        \Ee\left[\sup_{s\leq T} g(X_{s+T})\right]
        &\leq c \smash[t]{\Ee\left[\sup_{s\leq T} g(X_{s+T}-X_T)\right] \Ee\left[ g(X_T)\right]}\\
        &\leq c \Ee\left[\sup_{s\leq T} g(X_{s})\right] \Ee\left[ g(X_T)\right]
        \leq c\kappa_T^2.
    \end{align*}

    \medskip\textbf{\secundo} Let $t_0>0$ such that $\Ee[g(X_{t_0})]<\infty$. Using the Markov property we see that for any $s <t_0$
    \begin{gather*}
        \Ee\left[g(X_{t_0})\right]
        = \Ee\left[g(X_{t_0}-X_s + X_s)\right]
        = \smash[t]{\int_\rd \Ee\left[g(X_s+y)\right] \Pp(X_{t_0-s}\in dy).}
    \end{gather*}
    Thus, there is some $y$ such that $\Ee\left[g(X_s+y)\right]<\infty$, and we conclude from the submultiplicative property that $\Ee\left[g(X_s)\right] \leq cg(-y)\Ee\left[g(X_s+y)\right]<\infty$ for all $s\leq t_0$.  As before, we can now show that $\Ee \left[g(X_{2t_0})\right]<\infty$ and, by iteration, we see that $\Ee\left[g(X_t)\right]<\infty$ for all $t>0$.
\end{proof}

\begin{lemma}\label{gen-09}
    Let $g$ be a locally bounded submultiplicative function and denote by $g^\epsilon$ its regularization with a Friedrichs mollifier. If $A$ is the generator of a L\'evy process given by \eqref{intro-e02}, then $|A g^\epsilon(x)|$ is bounded by
    \begin{gather*}
     \left(  C_\epsilon \cdot C(b,Q,\nu;g)\cdot \sup_{|y|\leq 1}g(y) \right) \cdot g(x),
    \end{gather*}
    where the constant $C(b,Q,\nu;g)$ is of the form
    \begin{gather*}
        C(b,Q,\nu;g)
        =
        \left(|b|_{\ell^1}+|Q|_{\ell^1}+ \int_{y\neq 0} \left(1\wedge|y|^2\right) \nu(dy) + \int_{|y|\geq 1} g(y)\,\nu(dy) \right).
    \end{gather*}
\end{lemma}
Note that the constant $C_\epsilon$ appearing in Lemma~\ref{gen-09} is, in general, unbounded as $\epsilon\to 0^+$.
\begin{proof}
    Without loss of generality we can assume that $g\geq 1$. Otherwise we use $g+1$ instead of $g$ and observe that $A(g+1)^\epsilon = A(g^\epsilon + 1) = A g^\epsilon$. We set $s_\epsilon:=\sup_{|y|\leq \epsilon}g(y)$ and as in \eqref{intro-e02} write $A = L + J + K$.

    Observe that $|\partial^\alpha g^\epsilon(x)| \leq c_{\alpha,\epsilon}s_\epsilon g(x)$ holds for every multi-index $\alpha\in\nat_0^d$. This follows from $\partial^\alpha g^\epsilon(x) = (\partial^\alpha j_\epsilon)*g(x)$ and
    \begin{align*}
        \left|\partial^\alpha g^\epsilon(x)\right|
        \leq \int \left|\partial^\alpha j_\epsilon(y)\right| g(x-y)\,dy
        \leq cg(x) s_\epsilon \int \left|\partial^\alpha j_\epsilon(y)\right| dy.
    \end{align*}

    We can now estimate the three parts of $A$ separately. For the local part we use the above estimate with $|\alpha|=1$ and $|\alpha|=2$:
    \begin{align*}
        \left|L g^\epsilon\right|(x)
        &\leq s_\epsilon  \Bigg(\sum_{i=1}^d |b_i| c_{\epsilon,i} + \frac 12 \sum_{i,k=1}^d |q_{ik}| c_{\epsilon,i,k}\Bigg)  \cdot g(x)\\
        &\leq c_\epsilon s_\epsilon\left(|b|_{\ell^1}+|Q|_{\ell^1}\right)\cdot g(x).
    \end{align*}

\noindent
    The large-jump part is estimated using $|\alpha|=0$ and the sub\-multi\-pli\-ca\-ti\-vi\-ty of $g$:
    \begin{align*}
        \left|K g^\epsilon(x)\right|
        &\leq c_\epsilon s_\epsilon\int_{|y|\geq 1} \left(g(x) g(y) + g(x)\right) \nu(dy)\\
        &= c_\epsilon s_\epsilon \int_{|y|\geq 1} \left(g(y) + 1\right) \nu(dy)\cdot g(x).
    \end{align*}

    Using Taylor's formula with integral remainder term we can rewrite the part containing the small jumps and we see that
    \begin{align*}
        \left|J g^\epsilon(x)\right|
        &= \left|\sum_{i,k=1}^d \int_{0<|y|<1}\int_0^1  \partial_i\partial_k g^\epsilon(x+ty) y_i y_k (1-t)\,dt\,\nu(dy).\right|\\
        &\leq c_\epsilon s_\epsilon \sum_{i,k=1}^d c_{\epsilon,i,k} \int_{0<|y|<1}\int_0^1 g(x+ty)|y_i y_k|(1-t)\,dt\,\nu(dy)\\
        &\leq c_\epsilon s_\epsilon \sum_{i,k=1}^d c_{\epsilon,i,k} \int_{0<|y|<1}\int_0^1 g(ty) |y_i y_k|(1-t)\,dt\,\nu(dy)\cdot g(x)\\
        &\leq c_\epsilon' \sup_{|y|\leq 1} g(y) \int_{0<|y| <1}|y|^2\,\nu(dy)\cdot g(x).
    \end{align*}

    If we combine these three estimates, the claim follows.
\end{proof}

\begin{proof}[Theorem~\ref{gen-06}]
We show \ref{gen-06-a}$\Rightarrow$\ref{gen-06-b}$\Rightarrow$\ref{gen-06-c}$\Rightarrow$\ref{gen-06-d}$\Rightarrow$\ref{gen-06-e}$\Rightarrow$\ref{gen-06-g}$\Rightarrow$\ref{gen-06-h}$\Rightarrow$\ref{gen-06-a} and then \ref{gen-06-c}$\Rightarrow$\ref{gen-06-f}$\Rightarrow$\ref{gen-06-a}.\footnote{A direct proof of \ref{gen-06-e}$\Rightarrow$\ref{gen-06-a} is given in the next section. This is particularly interesting, if we want to go beyond L\'evy processes.}
Throughout the proof we will assume that $g\geq 1$ and $g\in C^2(\rd)$. Otherwise we could replace $g$ by its Friedrichs regularization $g^{\epsilon}$, see Lemma~\ref{intro-05}, and $g+1$, resp., $g^{\epsilon}+1$.

\medskip\textbf{\primo}
$\ref{gen-06-a}\Rightarrow\ref{gen-06-b}$. If $\Ee[g(X_t)]$ is finite for some $t>0$, then Lemma~\ref{gen-08} shows that $\Ee[g(X_t)]$ is finite for all $t>0$. For $a,b> 0$ we define a stopping time
\begin{gather*}
    \sigma := \sigma_{a,b} := \inf\left\{s\mid g(X_s) > cg(0) \eup^{a+b}\right\}
\end{gather*}
and observe that we can use the submultiplicativity of $g$ to get
\begin{align*}
    \Pp\left( g(X_t) > \eup^a\right)
    &\geq \Pp\left( g(X_t) > \eup^a,\; g(X_\sigma)\geq  cg(0)\eup^{a+b},\; \sigma \leq t\right)\\
    &\geq \Pp\left( g(X_\sigma - X_t) < g(0)\eup^b,\; g(X_\sigma)\geq  cg(0)\eup^{a+b},\; \sigma \leq t\right).
\end{align*}
The strong Markov property yields (for all $t\leq T$, $T$ will be determined in the following step)
\begin{align} \label{gen-e10} \begin{aligned}
    \Pp\left( g(X_t) > \eup^a\right)
    &\geq \int_{\sigma \leq t} \Pp\left( g(-X_{t-\sigma(\omega)}) < g(0)\eup^b\right) \Pp(d\omega)\\
    &\geq \inf_{r\leq t} \Pp\left( g(-X_{r}) < g(0)\eup^b\right)\cdot \Pp(\sigma \leq t)\\
    &\geq \frac 12 \Pp\left(\sup\nolimits_{s \leq t} g(X_s) > cg(0) \eup^{a+b}\right).
    \end{aligned}
\end{align}
In the last estimate we use that $\left\{\sup\nolimits_{s\leq t} g(X_s)>  cg(0) \eup^{a+b}\right\}\subseteq\{\sigma \leq t\} $. The factor $\frac 12$ comes from the fact that $g$ is locally bounded and $\lim_{t\to 0^+}\Pp(|X_t|>\epsilon)=0$ (continuity in probability), which shows that there is some $0<T\leq t_0$ such that
\begin{gather*}
    \Pp\left(g(-X_t)< g(0)\eup^b\right) \geq \frac 12
    \quad\text{for all $t\leq T$}.
\end{gather*}
This proves that for $\eup^\gamma := cg(0)\eup^{b}$
\begin{gather*}
    \Pp\left(\sup\nolimits_{s\leq T} g(X_s) > \eup^{a+\gamma}\right) \leq 2\Pp(g(X_T)>\eup^a).
\end{gather*}
We can now use the layer-cake formula to see that
\begin{align}
\begin{split}\label{gen-e15}
    \Ee\left[\sup_{s\leq T}g(X_s)\right]
    &= 1+\int_1^\infty \Pp\left(\sup\nolimits_{s\leq T} g(X_s)>y\right)\,dy\\
    &= 1+\eup^\gamma\int_{-\gamma}^\infty \Pp\left(\sup\nolimits_{s\leq T} g(X_s)>\eup^\gamma \eup^a\right)\eup^a\,da\\
    &\leq 1+\eup^\gamma\int_{-\gamma}^0 \eup^a\,da + 2\eup^\gamma \int_0^\infty \Pp\left(g(X_T)> \eup^a\right)\eup^a\,da\\
    &\leq \eup^\gamma + 2\eup^\gamma \Ee\left[g(X_T)\right].
\end{split}
\end{align}
Using Lemma~\ref{gen-08} we see that $\Ee\left[\sup_{s\leq n T}g(X_s)\right]<\infty$ for all $n\in\nat$, i.e.\ \ref{gen-06-b} holds for all $t>0$.

\medskip\textbf{\secundo}
$\ref{gen-06-b}\Rightarrow\ref{gen-06-c}$. If $\sup_{s \leq t} g(X_s)$ is integrable for some $t>0$, then it is integrable for all $t>0$, cf.\ Lemma~\ref{gen-08}. For fixed $t>0$, let  $\sigma\in\Tcal$ be a stopping time with $\sigma\leq t$. Then we have
\begin{gather*}
    g(X_\sigma) \leq \sup_{r\leq t} g(X_r) \in L^1(\Pp).
\end{gather*}
Consequently, the family $\{g(X_{\sigma})\}_{\sigma \in \Tcal,\sigma \leq t}$ is dominated by the integrable random variable $\sup_{r \leq t} g(X_r)$; hence, it is uniformly integrable.

\medskip\textbf{\tertio}
$\ref{gen-06-c}\Rightarrow\ref{gen-06-d}$. This is immediate from the fact that uniform integrability implies boundedness in $L^1$.

\medskip\textbf{\quarto}
$\ref{gen-06-d}\Rightarrow\ref{gen-06-e}$. Recall the definition of $\tau_R$ from Lemma \ref{intro-03}. We rearrange \eqref{intro-e12} and insert it into \eqref{intro-e10} to get
\begin{align*}
    &\Ee\left[(g\chi_R)(X_{t\wedge\tau_R})\right]
    - \Ee\left[\int_{[0,t\wedge\tau_R)} (L+J)g(X_s)\,ds\right]\\
    &\qquad =  g(0) +  \Ee\left[\int_{[0,t\wedge\tau_R)} \int_{|y|\geq 1} \left(g\chi_R(X_s+y) - g(X_s)\right)\nu(dy)\,ds\right]\\
    &\qquad =  g(0) +  \Ee\left[\int_{[0,t\wedge\tau_R)} \int_{|y|\geq 1} g\chi_R(X_s+y)\,\nu(dy)\,ds\right] \\
    &\qquad\qquad\mbox{}- \nu(|y|\geq 1)\cdot \Ee\left[\int_{[0,t\wedge\tau_R)}g(X_s)\,ds\right].
\end{align*}
Now we use Lemma~\ref{gen-09} for the L\'evy generator $L+J$ and the estimates
\begin{gather*}
    \smash[b]{\Ee\left[\int_{[0,t\wedge\tau_R)}g(X_s)\,ds\right]
    \leq \int_{[0,t)} \Ee \left[g(X_s)\right] ds
    \leq t \sup_{s\leq t}\Ee \left[g(X_s)\right]}
\intertext{and}
    \smash[t]{\Ee\left[(g\chi_R)(X_{t\wedge\tau_R})\right]
    \leq \Ee\left[g(X_{t\wedge\tau_R})\right]
    \leq \sup_{\sigma\in\Tcal, \sigma\leq t} \Ee\left[g(X_\sigma)\right].}
\end{gather*}
The constant appearing in Lemma \ref{gen-09} for $L+J$ depends only on $\nu|_{B_1(0)}$, in particular it is independent of $\int_{|y|>1} g(y)\nu(dy)$.
Because of our assumption~\ref{gen-06-d}, there is a constant $C_t$, not depending on $R$ or $\int_{|y|>1} g(y)\nu(dy)$,  such that
\begin{align*}
    C_t \geq \Ee\left[\int_{[0,t\wedge\tau_R)} \int_{|y|\geq 1} g\chi_R(X_s+y)\,\nu(dy)\,ds\right].
\end{align*}
Letting $R\to\infty$, Fatou's lemma and yet another application of submultiplicativity yield
\begin{align*}
    C_t
    &\geq \Ee\left[\int_{[0,t)} \int_{|y|\geq 1} g(X_s+y)\,\nu(dy)\,ds\right]\\
    &\geq \frac 1c \Ee\left[\int_{[0,t)} \int_{|y|\geq 1} \frac{g(y)}{g(-X_s)}\,\nu(dy)\,ds\right]\\
    &\geq \frac 1c \Ee \left( \int  \I_{\{|X_s|\leq\delta\}} \,\frac{ds}{g(-X_s)}\right) \int_{|y|\geq 1} g(y)\,\nu(dy)
\end{align*}
for any $\delta\in(0,1)$. Since $g$ is locally bounded, $g\ge 1$, and $s\mapsto X_s$ is c\`adl\`ag, \ref{gen-06-e} follows.

\medskip\textbf{\quinto}
	$\ref{gen-06-e}\Rightarrow\ref{gen-06-g}$. Let $\phi \in C_c^{\infty}(\real^d)$. The reflection $\widetilde{\phi}(x) := \phi(-x)$ is again in $C_c^{\infty}(\rd)$ and thus $\|A^* \widetilde{\phi}\|_{\infty}<\infty$ by the representation of $A^*$ as an integro-dif\-fer\-en\-tial operator \eqref{intro-e02} and Taylor's formula. Choose $R>0$ such that $\supp \phi$ is contained in the ball $B_R(0)$, then
	\begin{equation*}
		(A^* \widetilde{\phi})(-x) = \int_{y \neq 0} \phi(x-y) \, \nu(dy), \quad |x| \geq 2R.
	\end{equation*}
	Since $g$ is bounded on compact sets, it follows that
	\begin{align*}
		&\int_{\rd} g(x) |(A^* \widetilde{\phi})(x)| \, dx
		= \left( \int_{|x| \leq 2R} + \int_{|x|>2R} \right)  g(x) \, |(A^* \widetilde{\phi})(x)| \, dx \\
		&\quad\leq |B_{2R}(0)| \sup_{|x| \leq 2R} g(x) \|A^* \widetilde{\phi}\|_{\infty} + \int_{|x|>2R} \int_{y \neq 0} g(x) |\phi(x-y)| \, \nu(dy) \, dx.
	\end{align*}
	It remains to show that the integral expression on the right-hand side is finite. By Tonelli's theorem and a change of variables ($z=x-y$),
	\begin{align*}
		I:=& \int_{|x|>2R} \int_{y \neq 0} g(x) |\phi(x-y)| \, \nu(dy) \, dx\\
		=& \int \int \I_{|z+y|>2R} \I_{|z| \leq R} |\phi(z)| g(z+y) \, dz \, \nu(dy),
	\end{align*}
	where we use that $\supp \widetilde{\phi} \subseteq B_R(0)$. The estimate $\I_{|z+y|>2R} \I_{|z| \leq R} \leq \I_{|y| \geq R}$ and the submultiplicativity of $g$ now yield
	\begin{align*}
		I \leq c \left( \int_{|z| \leq R} |\phi(z)| \, g(z) \, dz \right) \left( \int_{|y| \geq R} g(y) \, \nu(dy) \right)<\infty;
	\end{align*}
	the integrals are finite because of \ref{gen-06-e} and the local boundedness of $g$, $\phi$.

\medskip\textbf{\sexto}
	$\ref{gen-06-g}\Rightarrow\ref{gen-06-h}$. Trivial.
	
\medskip\textbf{\septimo}
	$\ref{gen-06-h}\Rightarrow\ref{gen-06-a}$.	This part of the proof draws from a work by Hulanicki \cite{hulanicki}. Take $\phi \in C_c^{\infty}(\rd)$ non-negative such that $\phi \not \equiv 0$ and $A^* \phi \in L^1(\rd,g)$. Let us first assume that $g$ is bounded. Set
	\begin{equation*}
		h(t):= \int_{\rd} (P_t \phi)(-x) g(x) \, dx, \quad t \geq 0,
	\end{equation*}
	where $(P_t u)(x) = \Ee[u(x+X_t)]$ is the semigroup. We  want 
    that $|h'(t)| \leq C h(t)$ for some constant $C>0$. An application of Tonelli's theorem and a change of variables yield
	\begin{align*}
		\int_{\rd} |P_t A \phi(-x)| g(x) \, dx
		&\leq \Ee \left[ \int_{\rd} |(A \phi)(-y)| \, g(y+X_t) \, dy \right] \\
		&\leq c \Ee \left[ g(X_t) \right] \int_{\rd} |(A^* \phi)(y)| g(y) \, dy < \infty;
	\end{align*}
	the latter integral is finite because $A^* \phi \in L^1(\rd,g)$ and $g$ is bounded. Moreover, Dynkin's formula \eqref{intro-e06} entails that $\frac{d}{dt} P_t \phi = P_t A \phi$. Consequently, we can use a version of the differentiation lemma for parameter-dependent integrals, cf.\ \cite[Proposition A.1]{kuehn-schilling19},\footnote{This is a variant of the classical differentiation lemma for parameter-dependent integrals. If $u=u(t,x)$ is a function such that $t \mapsto u(t,x)$ is differentiable, $\int_a^b \int_{\rd} |\partial_t u(t,x)| \, dx \, dt < \infty$ and $t \mapsto \int_{\rd} \partial_t u(t,x) \, dx$ is continuous, then $t \mapsto \int_{\rd} u(t,x) \, dx$ is differentiable on $(a,b)$ with derivative $\int_{\rd} \partial_t u(t,x)\, dx$. These conditions are clearly satisfied in our application, see the calculation immediately before the present paragraph.} to obtain
	\begin{equation*}
		h'(t)
		= \int_{\rd} \frac{d}{dt} (P_t \phi)(-x) g(x) \, dx
		= \int_{\rd} (P_t A \phi)(-x) g(x) \, dx
	\end{equation*}
	and, by the above estimate,
	\begin{align*}
		|h'(t)|
		\leq c \Ee \left[ g(X_t) \right] \int_{\rd} |(A^* \phi)(y)| g(y) \, dy.
	\end{align*}
	The submultiplicativity of $g$ gives
	\begin{equation*}
		\left( \int_{\rd} \frac{\phi(x)}{g(x)} \, dx \right) g(y)
		\leq c \int_{\rd} \phi(x) g(y-x) \, dx
		= c (\phi*g)(y)
	\end{equation*}
	for all $y \in \rd$, i.e.\
	\begin{equation}
		g(y) \leq \frac{c}{\|\phi\|_{L^1(\rd,1/g)}} (\phi*g)(y); \label{gen-e30}
	\end{equation}
	note that
	\begin{equation*}
		\|\phi\|_{L^1(\rd,1/g)} = \int_{\rd} \frac{\phi(x)}{g(x)} \, dx \in (0,\infty)
	\end{equation*}
	because $g \geq 1$ is locally bounded and $\phi>0$ on a set of positive Lebesgue measure.  Using \eqref{gen-e30} for $y=X_t$, we get
	\begin{equation*}
		|h'(t)|
		\leq C \Ee[(\phi*g)(X_t)]
		= C \int_{\rd} \Ee[\phi(X_t-x)] g(x) \, dx
		= C h(t),\quad t\geq 0.
	\end{equation*}
	Hence, by Gronwall's lemma,
	\begin{equation*}
		h(t) \leq h(0) \eup^{\alpha t}, \quad t \geq 0,
	\end{equation*}
	for some constant $\alpha > 0$. Invoking once more \eqref{gen-e30}, we conclude that
	\begin{align}\label{eq-exp-2}
		\Ee[g(X_t)]
		\leq c' \Ee[(\phi*g)(X_t)]
		= c' h(t)
		\leq c' h(0) \eup^{\alpha t}.
	\end{align}
	So far, we assumed that $g$ is bounded. For unbounded $g$, we replace $g$ by $\min\{g,n\}$ -- which is again submultiplicative -- in the above estimates and find that
	\begin{equation*}
		\Ee[\min\{g(X_t),n\}] \leq c'' \eup^{\alpha t}
	\end{equation*}
	for some constants $c''>0$, $\alpha>0$, not depending on $n \in \nat$. Thus, by Fatou's lemma, it holds that
	$\Ee[g(X_t)] \leq c'' \eup^{\alpha t}$ for all $t \geq 0$.

\medskip\textbf{\octavo}
	$\ref{gen-06-c}\Rightarrow\ref{gen-06-f}$. Let $f\in L^1(\rd,g)$. We see that
	\begin{align*}
		\int_{\rd} |P^*_tf(x)|\, g(x) dx
		&\leq \Ee \left[\int_{\rd} |f(x-X_t)|\, g(x-X_t+X_t)dx\right]\\
	    &\leq c \Ee \left[g(X_t)\right] \| f\|_{L^1(\rd,g)},
	\end{align*}
	which shows that $P^*_t :L^1(\rd,g)\to L^1(\rd,g)$ is continuous. Let $\phi \in C_c(\rd)$ and assume that $\supp \phi$ is contained in some ball $B_R(0)$ with radius $R>0$. We show that $P^*_t\phi\to \phi$ in $L^1(\rd,g)$ as $t\to 0^+$. Since $\phi$ is uniformly continuous, we can pick $\epsilon=\epsilon(\eta)>0$ in such a way that $|\phi(x+y)-\phi(x)|\leq\eta$ for all $x$ and all $|y|<\epsilon$. Thus,
	\begin{align*}
		\int_{\rd}& |P^*_t\phi(x)-\phi(x)|\, g(x)dx
	    \leq  \Ee \left[\int_{\rd} |\phi(x-X_t)-\phi(x)|\, g(x) dx\right] \\
	     & = \Ee \left[\int_{ B_{R+\epsilon}(0)} |\phi(x-X_t)-\phi(x)| \I_{|X_t|<\epsilon}\, g(x) dx\right]\\
	     &\qquad\mbox{} + \int_{\rd} \Ee \left[|\phi(x-X_t)-\phi(x)| \I_{|X_t|\geq\epsilon}\right] g(x) dx \\
	     &\leq |B_{R+\epsilon}(0)|\sup_{|x| < R+\epsilon} g(x)\cdot \eta
	     + \Pp(|X_t|\geq\epsilon) \|\phi\|_{L^1(\rd,g)}\\
	     &\qquad\mbox{}+ c\,\Ee\left[g(X_t)\I_{|X_t|\geq\epsilon}\right] \|\phi\|_{L^1(\rd,g)}.
	\end{align*}
	Since the family $(g(X_t))_{t \leq 1}$ is uniformly integrable, $X_t\to 0$ in probability, and $\eta>0$ is arbitrary, we obtain that $\|P^*_t\phi-\phi\|_{L^1(\rd,g)}\to 0$ as $t\to 0^+$. Using that $C_c(\rd)$ is a dense subset of $L^1(\rd,g)$, we conclude that $(P^*_t)_{t\geq 0}$ is a strongly continuous operator semigroup on $L^1(\rd,g)$.	

\medskip\textbf{\nono}
    $\ref{gen-06-f}\Rightarrow\ref{gen-06-a}$.  Let $\phi\in C_c(\rd)\cap L^1(\rd,g)$ such that $\phi\geq 0$ and $\phi=1$ on $B_1(0)$. Using $g\geq 1$ and the submultiplicative property of $g$, we see that
    \begin{gather*}
        \|P^*_t\phi\|_{L^1(\rd,g)}
        = \Ee \left[\int_{\rd}  (g(x+X_t))\phi(x)dx\right]
        \geq \frac 1c \Ee\left[g(X_t)\right] \int_{\rd} \frac{\phi(x)\,dx}{ g(-x)};
    \end{gather*}
    this implies that $\Ee\left[ g(X_t)\right] <\infty$.

\medskip
    Since all conditions are equivalent, the exponential bound \eqref{eq-exp-1} follows from Gronwall's inequality, see \eqref{eq-exp-2} in Step~\septimo.
\end{proof}

The proof of Theorem~\ref{gen-06} contains the following moment result for L\'evy processes with bounded jumps. Alternate proofs can be found in
Sato \cite[Theorem 25.3, p.~159]{sato} or \cite[Lemma~8.2]{barca}. If we use in Step~\tertio\ the submultiplicative function $g(x):= \eup^{\beta |x|}$, $\beta>0$, $x\in\rd$, we get the following corollary.
\begin{corollary}\label{gen-11}
    Let $Y=(Y_t)_{t\geq 0}$ be a L\'evy process with bounded jumps, i.e.\ the L\'evy measure has bounded support. Then $Y$ has exponential moments, i.e.\ $\Ee\left[ \eup^{\beta |Y_t|}\right] < \infty$ for all $\beta>0$ and $t\geq 0$.
\end{corollary}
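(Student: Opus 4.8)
The plan is to deduce the corollary directly from the equivalence \ref{gen-06-a}$\Leftrightarrow$\ref{gen-06-e} in Theorem~\ref{gen-06}, applied to the radial submultiplicative function $g(x):=\eup^{\beta|x|}$ for a fixed $\beta>0$. First I would check that $g$ is admissible: it is continuous, hence locally bounded, and the triangle inequality gives $g(x+y)=\eup^{\beta|x+y|}\leq \eup^{\beta|x|}\eup^{\beta|y|}=g(x)g(y)$, so $g$ is submultiplicative with constant $c(g)=1$. (It is also radial with $r\mapsto\eup^{\beta r}$ increasing, so the variant in Remark~\ref{gen-07} applies as well if one prefers to phrase the conclusion in terms of $\sup_{s\leq t}|Y_s|$.)

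Next I would translate the hypothesis that the jumps of $Y$ are uniformly bounded, say $\sup_{s>0}|\Delta Y_s|\leq M$ for some deterministic $M\in(0,\infty)$, into a statement about the L\'evy triplet $(b,Q,\nu)$ of $Y$: the jumps of a L\'evy process are precisely the atoms of its Poisson jump measure, so uniformly bounded jumps are equivalent to $\supp\nu\subseteq\overline{B}_M(0)$ (if $\nu$ charged $\{|y|>M\}$, there would be positive probability of a jump larger than $M$ before time $1$). Granting this, condition~\ref{gen-06-e} holds for our $g$ because
\begin{gather*}
    \int_{|y|\geq 1} g(y)\,\nu(dy) = \int_{1\leq|y|\leq M} \eup^{\beta|y|}\,\nu(dy) \leq \eup^{\beta M}\,\nu\bigl(\{|y|\geq 1\}\bigr) < \infty,
\end{gather*}
where we use that every L\'evy measure is finite away from the origin (and the displayed integral is interpreted as $0$ when $M<1$).

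Theorem~\ref{gen-06} then yields condition~\ref{gen-06-a}, i.e.\ $\Ee[\eup^{\beta|Y_t|}]=\Ee[g(Y_t)]<\infty$ for every $t>0$; the case $t=0$ is trivial since $Y_0=0$ almost surely. As $\beta>0$ was arbitrary, $Y$ has exponential moments of every order, which is the assertion. The only step that is not completely routine is the identification ``uniformly bounded jumps $\iff$ $\supp\nu$ compact'', which I would justify via the L\'evy--It\^o decomposition (or simply cite from Sato); everything else is a direct specialization of Theorem~\ref{gen-06} to $g(x)=\eup^{\beta|x|}$.
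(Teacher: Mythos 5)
Your proof is correct and is essentially the paper's own argument: the corollary is obtained by specializing Theorem~\ref{gen-06} (the equivalence of \ref{gen-06-a} and \ref{gen-06-e}) to the submultiplicative function $g(x)=\eup^{\beta|x|}$, together with the observation that uniformly bounded jumps force $\supp\nu$ to be bounded, so that $\int_{|y|\geq 1}\eup^{\beta|y|}\,\nu(dy)<\infty$. The only difference is that you spell out the (standard) identification of bounded jumps with a compactly supported L\'evy measure via the L\'evy--It\^o decomposition, which the paper leaves implicit.
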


If $Y$ has a non-degenerate jump part, then moments of the type $\Ee \left[\eup^{\beta |X_t|^{1+\epsilon}}\right]$ do not exist, cf.\ \cite[Theorem 3.3(c)]{den-sch15}.

\section{Doob's condition \eqref{DL} for L\'evy processes}\label{sec-doo}

Let $Y$ be a stochastic process and $\Tcal$ be the family of all stopping times with respect to the natural filtration of $Y$. Recall that $Y$ \emph{satisfies the condition} \eqref{DL} if for each fixed $t>0$ the family $(Y_{t\wedge\sigma})_{\sigma\in\Tcal}$ is uniformly integrable, i.e.\ if
\begin{gather}\tag{\textup{DL}}\label{DL}
    \forall t>0\::\quad \lim_{R\to\infty} \sup_{\sigma\in\Tcal} \int |Y_{t\wedge\sigma}| \I_{|Y_{t\wedge\sigma}| \geq R}\,d\Pp = 0.
\end{gather}
It is well known, cf.\ \cite[Proposition IV.1.7, p.~124]{revuz-yor}, that a local martingale is a martingale if, and only if, it is of class DL.

As a direct consequence of Theorem~\ref{gen-06}, we get the following characterization of the condition \eqref{DL} for functions of a L\'evy process.

\begin{corollary}\label{doo-20}
    Let $X = (X_t)_{t\geq 0}$ be a L\'evy process with triplet $(b,Q,\nu)$ and $g$ a locally bounded submultiplicative function. The following are equivalent.
    \begin{enumerate}\itemsep=4pt
    	\item\label{doo-20-a} The process $(g(X_t))_{t\geq 0}$ satisfies the condition \eqref{DL};
    	\item\label{doo-20-b} $\Ee[g(X_t)]$ is finite for some $t>0$;
    	\item\label{doo-20-c} $\int_{|y|\geq 1} g(y)\,\nu(dy)<\infty$.
    \end{enumerate}
\end{corollary}

\begin{corollary}\label{doo-21}
    Let $X$ be a L\'evy process and $f:\rd\to\real$ such that for some locally bounded submultiplicative function $g$ it holds $|f(x)|\leq g(x)$. If the expectation $\Ee \left[g(X_t)\right]$ is finite and $(f(X_t))_{t\geq 0}$ is a local martingale, then $(f(X_t))_{t\geq 0}$ is a martingale.
\end{corollary}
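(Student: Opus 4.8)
The plan is to obtain the corollary as an immediate consequence of Theorem~\ref{doo-20} combined with the classical criterion, recalled above from \cite[Proposition~IV.1.7, p.~124]{revuz-yor}, that a local martingale is a true martingale if, and only if, it is of class \eqref{DL}. Thus the whole task reduces to checking that $(f(X_t))_{t\geq0}$ satisfies the condition \eqref{DL}.

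First I would note that the hypothesis $\Ee[g(X_t)]<\infty$ upgrades, by Lemma~\ref{gen-08}, to $\Ee[g(X_t)]<\infty$ for all $t>0$; since $|f|\leq g$ this gives $\Ee|f(X_t)|\leq\Ee[g(X_t)]<\infty$ for every $t\geq0$, so that $(f(X_t))_{t\geq0}$ is at least an integrable process (as is needed for \eqref{DL} to be meaningful). Next, Theorem~\ref{doo-20} tells us that $(g(X_t))_{t\geq0}$ itself satisfies \eqref{DL}. To transfer this to $f$, fix $t>0$ and $R>0$ and let $\sigma\in\Tcal$ be arbitrary. From $|f|\leq g$ we get both $|f(X_{t\wedge\sigma})|\leq g(X_{t\wedge\sigma})$ and the inclusion $\{|f(X_{t\wedge\sigma})|\geq R\}\subseteq\{g(X_{t\wedge\sigma})\geq R\}$, hence
\begin{gather*}
    |f(X_{t\wedge\sigma})|\,\I_{|f(X_{t\wedge\sigma})|\geq R}
    \leq g(X_{t\wedge\sigma})\,\I_{g(X_{t\wedge\sigma})\geq R}.
\end{gather*}
Taking expectations, then the supremum over $\sigma\in\Tcal$, and finally letting $R\to\infty$ shows, using that $(g(X_t))_{t\geq0}$ is of class \eqref{DL}, that $(f(X_t))_{t\geq0}$ is of class \eqref{DL} as well.

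Finally, $(f(X_t))_{t\geq0}$ is by hypothesis a local martingale and we have just shown it to be of class \eqref{DL}, so \cite[Proposition~IV.1.7]{revuz-yor} yields that it is a martingale, which is the claim. I do not expect a genuine obstacle here: all the substance sits in Theorem~\ref{doo-20} (equivalently, in Theorem~\ref{gen-06}), and the two remaining points — the passage from ``$\Ee[g(X_t)]<\infty$ for some $t$'' to ``for all $t$'' (Lemma~\ref{gen-08}) and the pointwise domination argument for \eqref{DL} — are routine.
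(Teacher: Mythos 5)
Your argument is correct and is essentially the paper's own proof: invoke Theorem~\ref{doo-20} to get the class \eqref{DL} property for $(g(X_t))_{t\geq0}$, transfer it to $(f(X_t))_{t\geq0}$ by the pointwise domination $|f|\leq g$, and conclude with the Revuz--Yor criterion that a local martingale of class \eqref{DL} is a true martingale. You merely spell out the domination step and the integrability upgrade via Lemma~\ref{gen-08} more explicitly than the paper does.
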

\begin{proof}
    Corollary~\ref{doo-20} shows that $(g(X_t))_{t\geq 0}$, hence $(f(X_t))_{t\geq 0}$ enjoys the property \eqref{DL}; therefore the local martingale $(f(X_t))_{t\geq 0}$ is already a proper martingale.
\end{proof}
The setting of Corollary~\ref{doo-21} is quite natural if one thinks of It\^o's formula or the expression \eqref{intro-e08} appearing in the discussion of Dynkin's formula: If $A$ is the (pointwise extension to positive $C^2$-functions of the) generator of $X$, and if $Af$ is equal to $0$, then $f(X_t)$ is, by It\^o's formula, a local martingale. Corollary~\ref{doo-21} thus gives a condition when this local martingale is a true martingale.

\subsection{A direct proof that Corollary~\ref{doo-20}.\ref{doo-20-c} entails \ref{doo-20-b}}
Sometimes it is useful to have a direct proof that existence of the moments of the L\'evy measure give generalized moments for the process. The approach below gives a method using standard `household' techniques from any course on Markov processes, notably Dynkin's formula and Gronwall's inequality; therefore it applies to more general (strong) Markov processes.

\begin{proof}[Alternative for Corollary~\ref{doo-20}.\ref{doo-20-c}$\Rightarrow\ref{doo-20-b}$ resp.\  Theorem~\ref{gen-06}.\ref{gen-06-e}$\Rightarrow\ref{gen-06-a}$]

In view of Lemma~\ref{intro-05} we may replace $g$ by its regularization $g^\epsilon$. Combining Dynkin's inequality (Lemma~\ref{intro-03}) and the estimate from Lemma~\ref{gen-09} shows
\begin{align*}
    \Ee\left[ g^\epsilon(X_{t\wedge\sigma})\right]
    \leq g^\epsilon(0) + C_{\epsilon,g}^{b,Q,\nu}\Ee \left[\int_{[0,t\wedge\sigma)} g(X_s)\,ds \right].
\end{align*}
The constant $C$ depends on $\epsilon$, the triplet $(b,Q,\nu)$ and on $\int_{|y|\geq 1} g(y)\,\nu(dy)$, see Lemma~\ref{gen-09}.
If we replace $\sigma$ by  $\sigma\wedge\tau_R$ with $\tau_R = \inf\{s\geq 0 \mid |X_s|\geq R\}$ and set $\kappa_R := \sup_{|y|\leq R} g(y)$, then we get
\begin{align*}
    \Ee\left[ g^\epsilon(X_{t\wedge\sigma\wedge\tau_R})\wedge\kappa_R\right]
    &\leq g^\epsilon(0) + C_{\epsilon,g}^{b,Q,\nu}\Ee \left[\int_{[0,t\wedge\sigma\wedge\tau_R)} g(X_s)\wedge\kappa_R\,ds \right]\\
    &\leq g^\epsilon(0) + \tilde C_{\epsilon,g}^{b,Q,\nu} \int_{[0,t)} \Ee \left[g^\epsilon(X_{s\wedge\sigma\wedge\tau_R})\wedge\kappa_R\right]\,ds.
\end{align*}
We may now appeal to Gronwall's lemma and find
\begin{align*}
    \Ee\left[ g^\epsilon(X_{t\wedge\sigma\wedge\tau_R})\wedge\kappa_R\right]
    \leq g^\epsilon(0) \eup^{t C_{\epsilon,g}^{b,Q,\nu}}.
\end{align*}
Fatou's lemma proves  $\Ee\left[g^\epsilon(X_{t\wedge\sigma})\right]<\infty$, and \ref{doo-20-b} follows, if we take $\sigma\equiv t$.
\end{proof}

\section{Moments of stochastically continuous additive processes}\label{sec-add}

In this section, we apply Theorem \ref{gen-06} to characterize the existence of generalized moments of stochastically continuous additive processes $(X_t)_{t \geq 0}$.

\begin{theorem}\label{add-10}
    Let $(X_t)_{t \geq 0}$ be a stochastically continuous additive process, and denote by $(b_t,Q_t,\nu_t)$ its characteristics. Let $g \geq 1$ be a locally bounded submultiplicative  function. The following assertions are equivalent for any $t>0$.
	\begin{enumerate}\itemsep=6pt
		\item\label{add-10-a)} $\int_{|y \geq 1} g(y) \, \nu_t(dy)<\infty$,
		\item\label{add-10-b)} $\Ee g(X_t)<\infty$,
		\item\label{add-10-c)} $\sup_{s \leq t} \Ee g(X_s)<\infty$,
		\item\label{add-10-d)} $\Ee \left( \sup_{s \leq t} g(X_s) \right)<\infty$,
		\item\label{add-10-e)} $(g(X_{\sigma}))_{ \sigma\in\Tcal, \sigma\leq t}$ is uniformly integrable.
	\end{enumerate}
    If one, hence all, of these  conditions is satisfied, then there are positive constants $ C,C_1,C_2(t)$ such that
	\begin{equation}\label{eq-exp-3}
	   \Ee \left( \sup_{s \leq t} g(X_{s}) \right) \leq C \Ee g(X_{t})
	   \quad\text{and}\quad
	   \Ee \left( \sup_{s \leq t} g(X_{s}) \right) \leq C_1 \eup^{C_2(t)}.
	\end{equation}
	The constants $C,C_1,C_2(t)$ depend on $g$ and \textup{(}the characteristics of\textup{)} the additive process.
\end{theorem}

\begin{remark}\label{add-14}
\quad a)
    Klass \& Yang \cite{klass} prove the inequality $$\Ee \left( \sup\limits_{s \leq t} g(X_s) \right) \leq C \sup\limits_{s \leq t} \Ee g(X_t)$$ for so-called moderate functions $g$, i.e.\ functions satisfying for every $x$ and $y$ in $\real^d$ $g(x+y)\leq C_g(g(x)+g(y))$.

\medskip b)
   	The exponential moment $\Ee \exp(X_t)$ of an additive process can be calculated explicitly; namely, $\Ee \exp(X_t)= \exp(\psi_t)$ for the continuous negative definite function $\psi_t$ with triplet $(b_t,Q_t,\nu_t)$. If $(X_t)_{t \geq 0}$ is stochastically continuous, this follows from the fact that there is for each $t>0$ a L\'evy process $(Y_s)_{s \geq 0}$ with $Y_t = X_t$ in distribution; thus, $\Ee \exp(X_t)=\Ee \exp(Y_t)$ can be calculated using the well known formula for L\'evy processes. This observation simplifies part of the proof in Fujiwara \cite{fujiwara}, who also considers general additive processes which are neither stochastically continuous nor semimartingales.
\end{remark}

\begin{proof}[Proof of Theorem~\ref{add-10}]
We show
\ref{add-10-b)}$\Rightarrow$%
\ref{add-10-a)}$\Rightarrow$%
\ref{add-10-c)}$\Rightarrow$%
\ref{add-10-d)}$\Rightarrow$%
\ref{add-10-e)}$\Rightarrow$\ref{add-10-b)}.

\medskip\textbf{\primo}
 \ref{add-10-b)}$\Rightarrow$\ref{add-10-a)}. Since the law of $X_t$ is infinitely divisible, there exists a L\'evy process $(Y_s)_{s \geq 0}$ such that $Y_t=X_t$ in distribution. Therefore, the assertion follows directly from Theorem \ref{gen-06}, \ref{gen-06-a}$\Rightarrow$\ref{gen-06-e}.

\medskip\textbf{\secundo}
\ref{add-10-a)}$\Rightarrow$\ref{add-10-c)}.
The alternative proof of Corollary~\ref{doo-20}.\ref{doo-20-c}$\Rightarrow\ref{doo-20-b}$ in combination with Lemma \ref{intro-05} shows that
\begin{gather*}
    \Ee\left[g(X_s)\right] \leq c_\epsilon g^\epsilon(0) \eup^{s C_\epsilon C(b_s,Q_s,\nu_s;g)}
\end{gather*}
for the constant $C(b_s,Q_s,\nu_s;g)$ appearing in Lemma~\ref{gen-09}, where $g^{\epsilon}$ is the Friedrichs regularization of $g$ for some fixed $\epsilon>0$. Since $(b_s,Q_s,\nu_s)_{s\geq 0}$  are the characteristics of an additive process, the constant $C(b_s,Q_s,\nu_s; 1)$ (from Lemma~\ref{gen-09}) depends continuously on $s\geq 0$, see \eqref{intro-e05}; therefore it is bounded on $[0,t]$. The condition \ref{add-10-a)} ensures that $\int_{|y|\geq 1} g(y)\,\nu_s(dy) \leq \int_{|y|\geq 1} g(y)\,\nu_t(dy)<\infty$. Together we see that $\sup_{s\leq t} C(b_s,Q_s,\nu_s;g) \leq C(t) < \infty$, and
\begin{gather}\label{add-e11}
    \sup_{s\leq t}\Ee\left[g(X_s)\right] \leq C_1 \eup^{t C_{\epsilon} C(t)}.
\end{gather}
This bound can also be obtained from the estimate in \cite[Theorem 4.1]{kuehn-spa}.

\medskip\textbf{\tertio}
\ref{add-10-c)}$\Rightarrow$\ref{add-10-d)}
The proof mimics the proof of Theorem \ref{gen-06}.\ref{gen-06-a}$\Rightarrow$\ref{gen-06-b}. As $(X_s)_{s\geq 0}$ is stochastically continuous, it is  uniformly stochastically continuous on compact time intervals, and there is some constant $b>0$ such that
\begin{align*}
	\inf_{s\in [0,t]}\Pp\left(g(X_s-X_t)<g(0)\eup^b\right)\geq \frac{1}{2}.
\end{align*}
Indeed, we have for all $s\in [0,t]$ and any $K>0$ that 
\begin{align*}
    &\Pp\left(g(X_s-X_t)\geq g(0)\eup^b\right)\\
    &\qquad\leq \Pp\left(g(X_s-X_t)\geq g(0)\eup^b,\: |X_t-X_s|\leq K\right) + \Pp\left(|X_t-X_s| > K\right).
\end{align*}
Since $(X_s)_{s\geq 0}$ is uniformly stochastic continuous on the compact interval $[0,t]$, we can pick $K>0$ so large that the second term on the right is smaller than $\frac 12$. Since $g$ is locally bounded, we can now choose $b=b(K)$ so large that the first probability becomes zero.
Choose a constant $c>0$ such that $g(x+y)\leq cg(x)g(y)$ for all $x,y\in \rd$. Set for a fixed $a>0$
\begin{align*}
	\sigma:=\inf\left\{s>0:g(X_s)>cg(0)\eup^{a+b}\right\}.
\end{align*}
Since $(X_s)_{s \geq 0}$ has independent increments, we argue as in the proof of Theorem \ref{gen-06}, see \eqref{gen-e10}, to find that
\begin{align*}
	\Pp\left(g(X_t)>\eup^{a}\right)
    &\geq \Pp\left(g(X_{\sigma}-X_t)<g(0)\eup^b,\:\sigma\leq t\right)\\
	&= \int_{\{\sigma\leq t\}} \Pp\left(g(X_s-X_t)<g(0)\eup^b\right)\big|_{s=\sigma}\,d\Pp\\
	&\geq \frac{1}{2}\Pp(\sigma\leq t).
\end{align*}
As $\left\{\sup_{s \leq t} g(X_s)>cg(0)\eup^b \eup^a\right\} \subseteq \{\sigma \leq t\}$, this implies
\begin{align*}
	\Pp\left(g(X_t)>\eup^a\right)\geq \frac{1}{2}\Pp\left(\sup_{s\leq t}g(X_s)>cg(0)\eup^b \eup^a\right).
\end{align*}
If $\gamma$ is such that $\eup^{\gamma} = cg(0)\eup^b$, then we see as in \eqref{gen-e15} that
\begin{align*}
	\Ee \left(\sup_{s\leq t} g(X_s)\right)
    \leq \eup^\gamma+2\eup^{\gamma}\Ee g(X_t)
    \leq 3\eup^\gamma \Ee g(X_t).
\end{align*}

\medskip\textbf{\quarto}
\ref{add-10-d)}$\Rightarrow$\ref{add-10-e)}$\Rightarrow$\ref{add-10-b)}. Trivial.

\medskip
The moment estimates \eqref{eq-exp-3} follow from \eqref{add-e11} and the estimate in Step \tertio.
\end{proof}

\begin{remark}
	If $(X_t)_{t \geq 0}$ is an additive process, then $\Ee g(X_t)<\infty$ does not imply, in general, $\Ee g(X_r)>\infty$ for $r>t$. Take e.g.\ an isotropic $\alpha$-stable L\'evy process $(L_t)_{t \geq 0}$ for $\alpha \in (0,2)$ and consider
	\begin{equation*}
		X_t := \begin{cases} 0, & t <1, \\ L_{t-1}, & t \geq 1. \end{cases}
	\end{equation*}
	Clearly, $(X_t)_{t \geq 0}$ is an additive process and $\Ee \eup^{X_t}<\infty$ for all $t \leq 1$ but for $t  > 1$ we have $\Ee \eup^{X_t}=\infty$.

\end{remark}

\section{Martingale methods for L\'evy processes}\label{sec-lat}

Let us give a further application of Theorem~\ref{gen-06} to the recurrence/transience behaviour of L\'evy processes. As a warm-up and in order to illustrate the method, we begin with a very short proof for the characterization of infinitely divisible random variables taking values in a lattice (Theorem~\ref{lat-33}). Interestingly, the proof for transience (Corollary~\ref{lat-39}) is essentially the same argument, the difference being, that Theorem~\ref{lat-33} uses the characteristic exponent $\psi$ of the process for $\xi\in\real$, while Corollary~\ref{lat-39} relies on (the extension of $\psi$ to certain) $\zeta\in \iup\real$. Both results are, of course, well-known (Sato~\cite[Section~24]{sato}, resp., Zhang \emph{et al.} \cite[Theorem 3.4]{zhang}) but the present proofs are substantially different and much shorter. Recall that a random variable $Y$ is infinitely divisible if, and only if, there is a L\'evy process $(X_t)_{t\geq 0}$ such that $Y\sim X_1$.

\begin{theorem}\label{lat-33}
    Let $(X_t)_{t\geq 0}$ be a one-dimensional L\'evy process with characteristic exponent $\psi$ and triplet $(b,Q,\nu)$, see \eqref{intro-e01}. The following assertions are equivalent \textup{(}for fixed $\beta\neq 0$\textup{)}
    \begin{enumerate}
    \item\label{lat-33-a}
        $\psi(\beta)=\iup\alpha$ for some $\alpha \in\real$.
    \item\label{lat-33-b}
        $\left|\Ee \left[\eup^{\iup\beta X_{t}}\right] \right| = 1$ for some, hence for all, $t>0$.
    \item\label{lat-33-c}
        $X_{t_0}+ \gamma$ has values in the lattice $2\pi\beta^{-1}\integer$  for some $t_0>0$ and some $\gamma\in\real$.
   \item\label{lat-33-d}
	   $X_t + \alpha\beta^{-1} t$ has values in the lattice $2\pi\beta^{-1}\integer$ for and $\alpha \in \real$ and all $t>0$.
    \item\label{lat-33-e}
        $\supp\nu \subset 2\pi\beta^{-1}\integer$, \quad $Q=0$ and 
        \begin{gather*}
            b=-\alpha\beta^{-1}+2\pi \beta^{-1}\!\!\sum\limits_{|k|<\beta/(2\pi)}\!\!k\nu(\{2\pi \beta^{-1}k\}).
        \end{gather*}
        
    \end{enumerate}
\end{theorem}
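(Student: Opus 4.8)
The plan is to prove the chain of implications
\ref{lat-33-a}$\Rightarrow$\ref{lat-33-b}$\Rightarrow$\ref{lat-33-c}$\Rightarrow$\ref{lat-33-d}$\Rightarrow$\ref{lat-33-e}$\Rightarrow$\ref{lat-33-a},
trading on the fact that $\Ee[\eup^{\iup\beta X_t}] = \eup^{-t\psi(\beta)}$ is the only analytic input one needs, together with the uniqueness of the L\'evy--Khintchine representation.

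First, \ref{lat-33-a}$\Rightarrow$\ref{lat-33-b} is immediate: if $\psi(\beta)=\iup\alpha$ then $|\Ee[\eup^{\iup\beta X_t}]| = |\eup^{-\iup\alpha t}| = 1$ for every $t>0$. Conversely, for \ref{lat-33-b}$\Rightarrow$\ref{lat-33-a}, note $|\eup^{-t\psi(\beta)}| = \eup^{-t\Re\psi(\beta)}$, so $|\Ee[\eup^{\iup\beta X_t}]|=1$ for one $t$ forces $\Re\psi(\beta)=0$, i.e.\ $\psi(\beta)=\iup\alpha$ with $\alpha := \Im\psi(\beta)$; this also shows the ``for some $t$'' and ``for all $t$'' versions of \ref{lat-33-b} coincide. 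Next, \ref{lat-33-b}$\Rightarrow$\ref{lat-33-c}: if $|\Ee[\eup^{\iup\beta X_{t_0}}]|=1$, then by the equality case in $|\Ee[Z]|\le \Ee|Z|$ for the unit-modulus random variable $Z = \eup^{\iup\beta X_{t_0}}$, there is a deterministic phase $\theta$ with $\eup^{\iup\beta X_{t_0}} = \eup^{\iup\theta}$ almost surely; writing $\theta = \beta\gamma$ (choose $\gamma := \theta/\beta$) gives $\beta(X_{t_0}+\gamma)\in 2\pi\integer$ a.s., which is \ref{lat-33-c} with this $\gamma$. (Strictly, one should phrase \ref{lat-33-c} as ``there exists $\gamma$ such that\dots''; I will assume that is the intended reading, since otherwise $\gamma$ is a free symbol.)

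The implication \ref{lat-33-c}$\Rightarrow$\ref{lat-33-d} is where the L\'evy structure enters. From \ref{lat-33-c} we have $\eup^{\iup\beta X_{t_0}} = \eup^{-\iup\beta\gamma}$ a.s., hence $\eup^{-t_0\psi(\beta)} = \eup^{-\iup\beta\gamma}$, so $\Re\psi(\beta)=0$ and $\psi(\beta) = \iup\alpha$ for a suitable $\alpha\in\real$ (determined mod $2\pi/t_0$, but any representative works). Then for \emph{every} $t>0$, $\Ee[\eup^{\iup\beta(X_t + \alpha\beta^{-1}t)}] = \eup^{-t\psi(\beta)}\eup^{\iup\alpha t} = 1$; since a random variable $W$ with $\Ee[\eup^{\iup\beta W}]=1$ and $|\eup^{\iup\beta W}|\le 1$ must satisfy $\eup^{\iup\beta W}=1$ a.s.\ (again the equality case, now with the value $1$ forced), we get $\beta(X_t+\alpha\beta^{-1}t)\in2\pi\integer$ a.s.\ for all $t>0$, which is \ref{lat-33-d}. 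Note \ref{lat-33-d}$\Rightarrow$\ref{lat-33-b} is then trivial (it gives $|\Ee[\eup^{\iup\beta X_t}]| = |\eup^{-\iup\alpha t}|=1$), so these three are mutually equivalent; I will only need \ref{lat-33-d}$\Rightarrow$\ref{lat-33-e}.

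The main work is \ref{lat-33-d}$\Rightarrow$\ref{lat-33-e}. The clean route: \ref{lat-33-d} says the L\'evy process $Y_t := X_t + \alpha\beta^{-1}t$ is lattice-valued in $h\integer$ with $h := 2\pi\beta^{-1}$, for all $t$. A L\'evy process supported on a lattice $h\integer$ is a compound Poisson process with drift whose jump measure is supported on $h\integer\setminus\{0\}$ and which has no Gaussian part: indeed $\Ee[\eup^{\iup\xi Y_t}]$ is $2\pi/h$-periodic in $\xi$, so $\Re\psi_Y$ is bounded, which forces $Q=0$ and $\int_{|y|<1}|y|^2\,\nu(dy)<\infty$ combined with $\supp\nu\subseteq h\integer$ — one can see $\supp\nu\subseteq h\integer$ because a jump of $Y$ of size not in $h\integer$ would move $Y$ off the lattice with positive probability; since $\nu$ is then purely atomic on $h\integer\setminus\{0\}$ with $\sum(1\wedge y^2)\nu(\{y\})<\infty$ and $Y$ is lattice-valued, necessarily $\sum_{|y|\ge1}\nu(\{y\})<\infty$ (else $Y_t$ would have infinitely many large jumps contributing — more carefully, Theorem~\ref{gen-06} or direct inspection shows lattice support with the constraint that $Y$ stays on a \emph{fixed} lattice at all times forces $\nu$ to be finite away from $0$, hence compound Poisson; actually the cleanest argument: $\sum_{y\in h\integer}(1\wedge y^2)\nu(\{y\})<\infty$ and $\supp\nu\subseteq h\integer$ already give that the ``small jump'' integral in \eqref{intro-e01} is a finite sum, so $\psi_Y(\xi) = -\iup b_Y\xi + \sum_{k\ne0}(1-\eup^{\iup\xi hk} + \iup\xi hk\I_{(0,1)}(h|k|))\nu(\{hk\})$). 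Plugging $\xi=\beta = 2\pi/h$ and using $\eup^{\iup\beta hk} = \eup^{2\pi\iup k}=1$: $\psi_Y(\beta) = -\iup b_Y\beta + \iup\beta h\sum_{0<h|k|<1}k\,\nu(\{hk\})$, and since $Y$ is $h\integer$-valued for all $t$ we have (as in \ref{lat-33-d}$\Rightarrow$\ref{lat-33-b}) $\psi_Y(\beta)=0$, giving $b_Y = h\sum_{0<h|k|<1}k\,\nu(\{hk\}) = 2\pi\beta^{-1}\sum_{|k|<\beta(2\pi)^{-1}}k\,\nu(\{2\pi\beta^{-1}k\})$. Since $b_Y = b + \alpha\beta^{-1}$, we recover $b = -\alpha\beta^{-1} + 2\pi\beta^{-1}\sum_{|k|<\beta(2\pi)^{-1}}k\,\nu(\{2\pi\beta^{-1}k\})$, together with $Q=0$ and $\supp\nu\subseteq 2\pi\beta^{-1}\integer$, which is exactly \ref{lat-33-e}. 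Finally \ref{lat-33-e}$\Rightarrow$\ref{lat-33-a} is a direct computation: substitute $Q=0$, $\supp\nu\subseteq h\integer$, and the stated value of $b$ into \eqref{intro-e01} at $\xi=\beta$, use $\eup^{\iup\beta hk}=1$ to kill the $(1-\eup^{\iup\beta x})$ terms, and check the remaining imaginary terms collapse to $\psi(\beta) = \iup\alpha$; the choice of $b$ is precisely engineered so the small-jump compensator cancels against $-\iup b\beta$ up to the $\iup\alpha$.

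The step I expect to be the genuine obstacle is establishing, inside \ref{lat-33-d}$\Rightarrow$\ref{lat-33-e}, that a L\'evy process which lies in a \emph{fixed} lattice $h\integer$ \emph{simultaneously} for all $t>0$ must have $Q=0$ and $\nu$ supported on $h\integer$ with the small-jump part automatically a convergent sum — i.e.\ that it is compound Poisson plus drift with lattice jumps. Everything else is either the trivial modulus/equality-case lemma ($|\Ee Z|=\Ee|Z|$ with $|Z|\le1$ forces $Z$ a.s.\ constant of modulus one) applied repeatedly, or a bookkeeping substitution into the L\'evy--Khintchine formula. For that obstacle the efficient argument is: periodicity of $\xi\mapsto\Ee[\eup^{\iup\xi Y_1}]$ with period $\beta$ forces $\Re\psi_Y$ bounded on $\real$; a bounded-below-by-a-positive-multiple-of-$\xi^2$ Gaussian term or an unbounded $\int(1-\cos\xi x)\nu(dx)$ would contradict this unless $Q=0$ and $\nu$ charges only $h\integer$ (a non-lattice atom or absolutely continuous part of $\nu$ makes $\int(1-\cos\xi x)\nu(dx)$ unbounded in $\xi$), after which $\sum(1\wedge y^2)\nu(\{y\})<\infty$ over $y\in h\integer$ forces finiteness of $\nu$ off $0$ — hence compound Poisson — and the drift is then pinned down by evaluating $\psi_Y(\beta)=0$ as above.
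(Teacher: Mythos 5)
Your chain \ref{lat-33-a}$\Rightarrow$\ref{lat-33-b}$\Rightarrow$\ref{lat-33-c}$\Rightarrow$\ref{lat-33-d}$\Rightarrow$\ref{lat-33-e}$\Rightarrow$\ref{lat-33-a} is exactly the paper's, and the argument goes through, but you prove the two non-trivial links differently. For \ref{lat-33-b}$\Rightarrow$\ref{lat-33-c} and \ref{lat-33-c}$\Rightarrow$\ref{lat-33-d} the paper isolates a lemma (Lemma~\ref{lat-37}) stating that $\Ee[\eup^{\iup\beta X}]=\eup^{\iup\theta}$ forces lattice support, and proves it by exhibiting the bounded martingale $Y_n=\prod_{k\le n}\tfrac12(1+\cos(X_k-\theta))$; you instead use the equality case in $|\Ee Z|\le\Ee|Z|$ for $|Z|=1$ (equivalently, $\Ee[\cos(\beta X-\theta)]=1$ with $\cos\le 1$ forces $\cos(\beta X-\theta)=1$ a.s.). That is the same content and is if anything more elementary; the paper's choice is deliberate, since the whole section is advertised as an illustration of martingale methods. (Minor point: with your normalization you want $\gamma=-\theta/\beta$, not $\theta/\beta$.)

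The one place where you go astray is inside \ref{lat-33-d}$\Rightarrow$\ref{lat-33-e}. Your concluding summary rests the identification of $\supp\nu$ on the claim that ``a non-lattice atom or absolutely continuous part of $\nu$ makes $\int(1-\cos\xi x)\,\nu(dx)$ unbounded in $\xi$''. That is false: for a \emph{finite} non-lattice measure, e.g.\ $\nu=\delta_{x_0}$ with $x_0\notin h\integer$, the integral is $1-\cos(\xi x_0)\le 2$, so boundedness (periodicity) of $\Re\psi_Y$ detects $Q=0$ but cannot detect lattice support of $\nu$. Your parallel pathwise argument (a jump of size outside the closed set $h\integer$ would take $Y$ off the lattice with positive probability) is valid and rescues the step, but the much shorter route --- and the one the paper intends --- is to use the single identity $\Re\psi(\beta)=\tfrac12 Q\beta^2+\int(1-\cos(\beta x))\,\nu(dx)=0$: both summands are non-negative, so $Q=0$ and $\cos(\beta x)=1$ $\nu$-a.e., i.e.\ $\supp\nu\subset 2\pi\beta^{-1}\integer$; the formula for $b$ then drops out of $\Im\psi(\beta)=\alpha$ (the small-jump compensator is a finite sum because the lattice is bounded away from $0$, so no separate ``compound Poisson'' discussion is needed). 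I recommend replacing the boundedness argument by this evaluation at $\xi=\beta$; as written, one of your two stated justifications for $\supp\nu\subset h\integer$ is simply wrong, even though the conclusion stands.
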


\begin{remark}\label{lat-35}
    Our proof shows that there is also a connection between $\gamma$ and $\alpha$ in Theorem \ref{lat-33}.\ref{lat-33-c} and \ref{lat-33-d}. If we use $t=t_0$ in \ref{lat-33-d}, it holds that the localization parameter $\gamma \in \alpha \beta^{-1} t_0 + 2\pi \beta^{-1}\integer$.
\end{remark}

\begin{proof}
    \ref{lat-33-a}$\Rightarrow$\ref{lat-33-b}. This follows from $\Ee \left[\eup^{\iup\beta X_t}\right] = \eup^{-t\psi(\beta)}$ for any $t>0$.

    \medskip
    \ref{lat-33-b}$\Rightarrow$\ref{lat-33-c}. We fix $t_0>0$ such that $\left|\Ee \left[\eup^{\iup\beta X_{t_0}}\right] \right| = 1$  and apply Lemma~\ref{lat-37} to $X= \beta X_{t_0}$. This shows that there exists some $\gamma \in \real$ such that $X$ is supported in $2\pi\beta^{-1}\integer -\gamma$.

    \medskip
    \ref{lat-33-c}$\Rightarrow$\ref{lat-33-d}. Let $t_0>0$ be as in \ref{lat-33-c} and $t>0$, $t\neq t_0$. We see that $\Ee\left[ \eup^{\iup \beta X_{t}}\right] = (\eup^{-t_0\psi(\beta)})^{t/t_0}$. As $|\eup^{-t_0\psi(\beta)}|=1$, we know that $\psi(\beta)\in \iup\real$. Lemma~\ref{lat-37} shows that $X_{t}$ is supported in $2\pi\beta^{-1}\integer + \iup t\psi(\beta)\beta^{-1}=2\pi\beta^{-1}\integer - t\Im\psi(\beta)\beta^{-1}$ for every $t>0$.

    \medskip
    \ref{lat-33-d}$\Rightarrow$\ref{lat-33-e}. Let $t>0$. A direct calculation shows that
    \begin{align*}
        \Ee\left[\eup^{\iup \beta X_t}\right]
        = \sum_{k\in \integer} \eup^{\iup 2\pi  k-\iup\alpha t}\Pp(X_t=2\pi\beta^{-1}k - \alpha\beta^{-1} t)
        =\eup^{-\iup\alpha t}.
    \end{align*}
    On the other hand, infinite divisibility entails
    \begin{gather*}
        \eup^{-\iup\alpha t}
        = \Ee \left[\eup^{\iup \beta X_t}\right]
        = \eup^{-t\psi(\beta)}\quad \text{for all\ \ } t>0.
    \end{gather*}
    This is only possible if $\psi(\beta)=\iup\alpha$. Comparing this with the L\'evy--Khintchine formula \eqref{intro-e01}, we can conclude that \begin{gather*}
        b=-\alpha\beta^{-1} + 2\pi \beta^{-1}\sum_{|k|<\beta (2\pi)^{-1}}k\nu(\{2\pi \beta^{-1}k\}),
    \end{gather*} 
    $Q=0$ and $\supp\nu\subset 2\pi \beta^{-1}\integer$.

    \medskip
    \ref{lat-33-e}$\Rightarrow$\ref{lat-33-a}. This follows from the L\'evy--Khintchine formula \eqref{intro-e01}.
\end{proof}

The key step in the proof of Theorem~\ref{lat-33} is the following well-known result. The standard proof can be found in Lukacs~\cite[Section~2.1]{lukacs}, we prefer to give a(n equally short) martingale argument which appears, again, in the proof of Corollary~\ref{lat-39} and highlights the connection between Theorem~\ref{lat-33} and Corollary~\ref{lat-39}.

\begin{lemma}\label{lat-37}
    Let $X$ be a real random variable. If there exists $\beta\in \real\setminus\{0\}$ such that $\Ee \left[\eup^{\iup \beta X}\right]= \eup^{\iup\theta}$ for some $\theta \in \real$, then the distribution of $X$ is supported on $2\pi \beta^{-1}\integer+\beta^{-1}\theta$.
\end{lemma}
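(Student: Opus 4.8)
The plan is to exploit the fact that the hypothesis forces $|\Ee[\eup^{\iup\beta X}]|=1$, which is a rigidity condition: equality in $|\Ee[\eup^{\iup\beta X}]|\leq \Ee|\eup^{\iup\beta X}|=1$ can only hold if $\eup^{\iup\beta X}$ is almost surely constant. To make this precise, I would first normalize by multiplying through with $\eup^{-\iup\theta}$, so that with $Y:=\beta X-\theta$ the assumption reads $\Ee[\eup^{\iup Y}]=1$. Taking real parts gives $\Ee[\cos Y]=1$.

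Next I would use the elementary bound $\cos y\leq 1$ for every $y\in\real$. Thus $1-\cos Y\geq 0$ almost surely, while $\Ee[1-\cos Y]=1-1=0$. A nonnegative random variable with vanishing expectation is zero almost surely, so $\cos Y=1$ a.s. Since $\{y\in\real : \cos y=1\}=2\pi\integer$, this yields $Y=\beta X-\theta\in 2\pi\integer$ almost surely.

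Finally I would unwind the substitution: $X\in\beta^{-1}(2\pi\integer+\theta)=2\pi\beta^{-1}\integer+\beta^{-1}\theta$ almost surely. Since $2\pi\beta^{-1}\integer+\beta^{-1}\theta$ is a closed set carrying full mass, it contains $\supp \mathrm{law}(X)$, which is the assertion.

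There is essentially no obstacle here; the argument is short and the only points needing (entirely standard) care are the implication ``nonnegative with mean zero $\Rightarrow$ zero a.s.'' and the identification of the zero set of $1-\cos$. One could alternatively phrase the whole thing as: the imaginary part gives $\Ee[\sin Y]=0$ as well, so $\Ee[\eup^{\iup Y}]=\Ee[\cos Y]=1$ pins down the full distribution of $\eup^{\iup Y}$ on the unit circle to the point mass at $1$; but the real-part-only argument above is the cleanest route.
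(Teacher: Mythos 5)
Your argument is correct, and it is genuinely more elementary than the one in the paper. You reduce everything to the rigidity statement ``$\Ee[\cos Y]=1$ with $\cos Y\leq 1$ forces $\cos Y=1$ a.s.'', applied directly to $Y=\beta X-\theta$; every step (taking real parts, the zero set of $1-\cos$, passing from full measure of a closed set to the support) is sound. The paper instead takes iid copies $X_1,X_2,\dotsc$ of $X$ and shows that the products $Y_n=\prod_{k=1}^n\tfrac{1+\cos(X_k-\theta)}{2}$ form a bounded martingale with $\Ee Y_n=1$, so that $Y_n\to Y_\infty=1$ a.s.\ and hence $Y_1=\Ee[Y_\infty\mid\Fscr_1]=1$ a.s., which is the same conclusion $\cos(X-\theta)=1$ a.s. Note that the final step of the paper's proof invokes exactly the same elementary fact you use (a random variable bounded by $1$ with mean $1$ equals $1$ a.s., applied to $Y_\infty$), so the martingale is essentially scaffolding; its purpose is expository, since the lemma sits in a section advertising ``martingale methods'' and serves as a warm-up for the martingale argument in Theorem~\ref{lat-39}. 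Your direct route buys brevity and avoids introducing the auxiliary iid sequence; the paper's route buys a uniform template that carries over to the transience proof. Both are complete.
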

\begin{proof}
    Without loss of generality we may assume that $\beta=1$, i.e.\ it holds that $\left|\Ee \left[\eup^{\iup  X}\right]\right|=1$ or $\Ee \left[\eup^{\iup  (X-\theta)}\right]=1$ for some $\theta\in [0,2\pi)$. Let $(X_i)_{i \in \nat}$ be iid copies of the random variable $X$ and define for every $n\in\nat$
    \begin{gather*}
        Y_n
        := \frac 1{2^n}\prod_{k=1}^n(1+\cos(X_k -\theta))
        =\prod_{k=1}^n \frac{1+\cos(X_k -\theta)}{2}.
    \end{gather*}
    Set $\Fscr_n:=\sigma(X_1,\dotso,X_n)$ and note that $(Y_n)_{n\in\nat}$ is adapted to this filtration. We see that
    \begin{align*}
        \Ee[ Y_n \mid \mathcal{F}_{n-1}]
        &= \Ee\left[ \prod_{k=1}^n \frac{1+\cos(X_k-\theta)}{2} \mid \mathcal{F}_{n-1}\right]\\
        &= Y_{n-1} \frac{1+\Ee\left[\cos(X_n-\theta)\right]}{2}
        = Y_{n-1},
    \end{align*}
    so $(Y_n)_{n\in \nat}$ is a discrete-time martingale. As $0\leq Y_n \leq 1$ and $\Ee Y_n = 1$, we conclude that $Y_n$ converges to $1$ in $L^1$ and a.s.\ as $n\to\infty$. Thus, $Y_1=\Ee[1|\mathcal{F}_1]=1$ a.s., which implies that $X -\theta$ takes only values in the lattice $2\pi\integer$.
\end{proof}
Assume now that $(X_t)_{t\geq 0}$ is a L\'evy process which admits an exponential moment $\Ee\left[\eup^{\beta X_t}\right]<\infty$ for some $\beta\neq 0$. By Theorem~\ref{gen-06}, $\int_{|y|\geq 1} \eup^{\beta y}\,\nu(dy)<\infty$, and it is easy to see from the L\'evy--Khintchine formula \eqref{intro-e01} that the exponent $\psi$ has a continuous continuation to all complex numbers $\xi+\iup \eta\in\comp$ with $\xi\in\real$ and $\eta$ between $0$ and $-\beta$. In particular,
\begin{gather*}
    \Ee\left[\eup^{\beta X_t}\right] = \eup^{-t\psi(-\iup\beta)},\quad t>0,
\end{gather*}
which shows that the sets $A := \left\{\beta\in\real \mid \Ee\left[\eup^{\beta X_t}\right] = 1\right\}$, where $t>0$ is fixed, $\{\beta\in\real \mid \psi(-\iup\beta)=0\}$ and $\left\{\beta\in\real \mid \forall t>0\::\: \Ee\left[\eup^{\beta X_t}\right] = 1\right\}$ coincide. Using that $(X_t)_{t \geq 0}$ has stationary and independent increments, it follows with a similar argument as in the proof of Lemma~\ref{lat-37} that $A=\{\beta \in \real \mid (\eup^{\beta X_t})_{t \geq 0}$ is a martingale$\}$.

\begin{corollary}\label{lat-39}
    Let $(X_t)_{t\geq 0}$ be a one-dimensional L\'evy process. If the set $A\setminus\{0\}$ is not empty, then $X_t$ is transient.
\end{corollary}
\begin{proof}
    Define $Y_t=\eup^{\beta X_t}$ with $\beta\in A\setminus\{0\}$. The discussion before Corollary~\ref{lat-39} shows that $Y_t$ is a martingale. Since $t\mapsto\eup^{\beta Y_t}$ is positive and right-con\-tinu\-ous, the martingale convergence theorem shows that $\lim_{t\to\infty}Y_t = Y_\infty$ a.s.\ for some a.s.\ finite random variable $Y_\infty$.  As $\beta X_t$ is again a L\'evy process, which is either transient or recurrent, we see that $\eup^{\beta X_t}$ can only converge to a finite limit if $\beta X_t\to -\infty$ as $t\to\infty$; thus,  $X_t$ cannot be recurrent.
\end{proof}

    It is clear that Corollary~\ref{lat-39} still holds for a $d$-dimensional L\'evy process if we interpret $\xi X_t$ and $\beta X_t$ as scalar products with $\xi,\beta\in\rd$. By Cauchy's inequality, $ |\beta\cdot X_t|/|\beta|\leq |X_t|$, and so $\lim_{t\to\infty}|X_t|=\infty$, i.e.\ $(X_t)_{t\geq 0}$ is transient if $\beta\cdot X_t$ is transient.

\end{document}